\def\@tocline#1#2#3#4#5#6#7{\relax
  \ifnum #1>\c@tocdepth 
  \else
    \par \addpenalty\@secpenalty\addvspace{#2}%
    \begingroup \hyphenpenalty\@M
    \@ifempty{#4}{%
      \@tempdima\csname r@tocindent\number#1\endcsname\relax
    }{%
      \@tempdima#4\relax
    }%
    \parindent\z@ \leftskip#3\relax \advance\leftskip\@tempdima\relax
    \rightskip\@pnumwidth plus4em \parfillskip-\@pnumwidth
    #5\leavevmode\hskip-\@tempdima
      \ifcase #1
       \or\or \hskip 1em \or \hskip 2em \else \hskip 3em \fi%
      #6\nobreak\relax
    \dotfill\hbox to\@pnumwidth{\@tocpagenum{#7}}\par
    \nobreak
    \endgroup
  \fi}
\newcommand{\xdashrightarrow}[2][]{\ext@arrow 0359\rightarrowfill@@{#1}{#2}}
\def\rightarrowfill@@{\arrowfill@@\relax\relbar\rightarrow}
\def\arrowfill@@#1#2#3#4{%
  $\m@th\thickmuskip0mu\medmuskip\thickmuskip\thinmuskip\thickmuskip
   \relax#4#1
   \xleaders\hbox{$#4#2$}\hfill
   #3$%
}
\newcommand{\sfd}{{\sf d}}
\renewcommand{\d}{{\rm d}}
\newcommand{\N}{\mathbb{N}}
\newcommand{\R}{\mathbb{R}}
\newcommand{\LIP}{{\rm LIP}}
\newcommand{\Lip}{{\rm Lip}}
\newcommand{\lip}{{\rm lip}}
\newcommand{\lipa}{{\rm lip}_a}
\newcommand{\spt}{{\rm spt}}
\newcommand{\ud}{\underline\d}
\newcommand{\eps}{\varepsilon}  
\newcommand{\nchi}{{\raise.3ex\hbox{$\chi$}}}
\newcommand{\weakto}{\rightharpoonup}
\renewcommand{\P}{{\rm P}}
\newcommand{\restr}[1]{\lower3pt\hbox{$|_{#1}$}}
\newcommand{\X}{{\rm X}}
\newcommand{\Y}{{\rm Y}}
\newcommand{\mm}{{\mathfrak m}}
\newcommand{\ppi}{\boldsymbol{\pi}}
\newcommand{\e}{{\rm e}}
\newcommand{\limi}{\varliminf}
\newcommand{\lims}{\varlimsup}
\newcommand{\fr}{\penalty-20\null\hfill$\blacksquare$} 
\numberwithin{equation}{section}
\newtheorem{theorem}{Theorem}[section]
\newtheorem{corollary}[theorem]{Corollary}
\newtheorem{lemma}[theorem]{Lemma}
\newtheorem{proposition}[theorem]{Proposition}
\newtheorem{definition}[theorem]{Definition}
\newtheorem{remark}[theorem]{Remark}
\title{Infinitesimal Hilbertianity of weighted Riemannian manifolds}
\author[D.~Lu\v ci\' c]{Danka Lu\v ci\' c}
\address{SISSA, via Bonomea 265, 34136 Trieste - Italy}
\email{dlucic@sissa.it}
\author[E.~Pasqualetto]{Enrico Pasqualetto}
\address{SISSA, via Bonomea 265, 34136 Trieste - Italy}
\email{epasqual@sissa.it}
\begin{document}
\begin{abstract}
The main result of this paper is the following: any `weighted' Riemannian
manifold $(M,g,\mu)$ -- i.e.\ endowed with a generic non-negative Radon measure $\mu$
-- is `infinitesimally Hilbertian', which means that its associated
Sobolev space $W^{1,2}(M,g,\mu)$ is a Hilbert space.

We actually prove a stronger result: the abstract tangent
module (\`{a} la Gigli) associated to any weighted reversible Finsler
manifold $(M,F,\mu)$ can be isometrically embedded into the space of
all measurable sections of the tangent bundle of $M$ that are $2$-integrable
with respect to $\mu$.
\end{abstract}

\makeatletter

\date{\today} 

\keywords{Infinitesimal Hilbertianity, Sobolev space,
Finsler manifold, smooth approximation of Lipschitz functions} 

\subjclass[2010]{53C23, 46E35, 58B20}

\maketitle
\tableofcontents
\section*{Introduction}
\noindent\textit{General overview.}
In the rapidly expanding theory of geometric analysis over metric measure
spaces $(\X,\sfd,\mm)$ a key role is played by the notion of Sobolev space
$W^{1,2}(\X,\sfd,\mm)$ that has been proposed in
\cite{Cheeger00} (see also \cite{Shanmugalingam00,AGS11_HeatFlow}).
In general, the space $W^{1,2}(\X,\sfd,\mm)$ has a Banach space structure
but is not necessarily a Hilbert space.
Those metric measure spaces $(\X,\sfd,\mm)$ whose associated Sobolev space
$W^{1,2}(\X,\sfd,\mm)$ is Hilbert are said to be \emph{infinitesimally Hilbertian};
cf.\ \cite{Gigli12}. This choice of terminology is due to the fact that such requirement
captures, in a sense, the property of being a `Hilbert-like' space at
arbitrarily small scales.
\smallskip

Infinitesimally Hilbertian spaces are particularly relevant in several
situations. For instance, in the framework of synthetic lower Ricci curvature
bounds -- in the sense of Lott-Villani \cite{Lott-Villani07} and Sturm
\cite{Sturm06I,Sturm06II}, known as $\sf CD$ \emph{condition} --
the infinitesimal Hilbertianity assumption has been used to single out
the `Riemannian' structures among the `Finslerian' ones, thus bringing
forth the well-established notion of $\sf RCD$ \emph{space} \cite{AGMR15,AGS14,Gigli12}.
We refer to the surveys \cite{Villani2016,Villani2017,Ambrosio18} 
for a detailed account of the vast literature concerning the
${\sf CD}/{\sf RCD}$ conditions.
\smallskip

The main purpose of the present paper is to prove that any geodesically
complete Riemannian manifold  $(M,g)$ is `universally infinitesimally Hilbertian',
meaning that 
\[(M,\sfd_g,\mu)\text{ is infinitesimally Hilbertian for any Radon measure }
\mu\geq 0\text{ on }M,\]
where $\sfd_g$ stands for the distance on $M$ induced by the Riemannian metric $g$.
This will be achieved as an immediate consequence of the following result:
given a geodesically complete, reversible Finsler manifold $(M,F)$ and a non-negative
Radon measure $\mu$ on $M$, it holds that the `abstract' tangent module $L^2_\mu(TM)$
associated to $(M,F,\mu)$ in the sense of Gigli \cite{G18_NonSmooth} can be isometrically embedded
into the `concrete' space of all $L^2(\mu)$-sections of the tangent bundle $TM$ of $M$.
\bigskip

\noindent\textit{Motivation and related works.}
Our interest in universally infinitesimally Hilbertian metric spaces is mainly
motivated by the study of metric-valued Sobolev maps, as we are going to describe.
Given a metric measure space $(\X,\sfd_\X,\mm)$ and a complete separable metric
space $(\Y,\sfd_\Y)$, one of the possible ways to define the space
${\rm S}^2(\X;\Y)$ of `weakly differentiable' maps from $\X$ to $\Y$
is via post-composition; cf.\ \cite{HKST15}. As shown in \cite[Theorem 3.3]{GigPasSou18},
any Sobolev map $u\in{\rm S}^2(\X;\Y)$ can be naturally associated with an
$L^0(\mm)$-linear and continuous operator
\[\d u:\,L^0_\mm(T\X)\longrightarrow\big(u^*L^0_\mu(T^*\Y)\big)^*,\]
where the finite Borel measure $\mu$ is defined as $\mu:=u_*(|Du|^2\mm)$;
the map $\d u$ is called \emph{differential}.
(We refer to \cite[Section 2]{GigPasSou18} for a brief summary of the terminology
used above.) We underline that the measure $\mu$ is not given a priori,
but it rather depends on the map $u$ itself in a non-trivial manner.
This implies that the target module of $\d u$ might possess a very complicated
structure. One of the reasons why we focus on universally infinitesimally
Hilbertian spaces $(\Y,\sfd_\Y)$ is that the cotangent module $L^0_\mu(T^*\Y)$ is a
Hilbert module regardless of the chosen measure $\mu$. In particular,
the target space $(u^*L^0_\mu(T^*\Y))^*$ of the differential $\d u$ is a Hilbert
module as well and can be canonically identified with $u^*L^0_\mu(T\Y)$. This allows for
more refined calculus tools and nicer functional-analytic properties,
cf.\ \cite{G18_NonSmooth} for the related discussion. Even more importantly,
to show that the abstract tangent module $L^0_\mu(T\Y)$ isometrically embeds
into some geometric space of sections would provide a `more concrete' representation
of the differential operator $\d u$.
\smallskip

The results contained in this paper have been already proved in \cite{GP16_Behaviour} 
for the particular case in which the Finsler manifold $(M,F)$ under consideration is
the Euclidean space $\R^n$ equipped with any norm $\|\cdot\|$. In fact, the structure
of our proofs follows along the path traced by \cite{GP16_Behaviour}.
We also mention that in the forthcoming paper \cite{DiMarGigPasSou18} it is proven that
locally ${\sf CAT}(\kappa)$ spaces are universally infinitesimally Hilbertian;
we recall that these are geodesic metric spaces whose sectional curvature is
(locally) bounded from above by $\kappa\in\R$ in the sense
of Alexandrov. The motivation behind such result is that it could be helpful
(if used in conjunction with the notion of differential operator for metric-valued
Sobolev maps discussed above) in order to study the regularity properties
of harmonic maps from finite-dimensional $\sf RCD$ spaces to ${\sf CAT}(0)$ spaces.
\bigskip

\noindent\textit{Outline of the work.}
In Section \ref{s:preliminaries} we briefly recall the basics
of Sobolev calculus on metric measure spaces and the language of $L^2$-normed
$L^\infty$-modules proposed by Gigli in \cite{G18_NonSmooth}. 
\smallskip

Section \ref{s:Finsler} is entirely devoted to Finsler geometry.
After a short introduction to few basic concepts, we will be concerned with the
approximation of Lipschitz functions by $C^1$-functions.
Our new contribution in this regard, namely Theorem \ref{thm:C1_approx_Lip},
constitutes a `more local' version of similar results that have been proved
in \cite{AFLR07,JS11,GJR13}.
\smallskip

The core of the paper is Section \ref{s:main}. In Proposition \ref{prop:density_energy_C1}
we exploit the above-mentioned approximation result to bridge the gap between
the abstract Sobolev space associated to a weighted Finsler manifold $(M,F,\mu)$
and the `true' differentials of functions in $C^1_c(M)$.
This represents the key passage to build a quotient projection map
from the space $\Gamma_2(T^*M;\mu)$ of all $L^2(\mu)$-sections of $T^*M$ to
$L^2_\mu(T^*M)$ (Lemma \ref{lem:def_P}, Proposition \ref{prop:P_quotient}).
We thus obtain -- by duality -- an isometric embedding of $L^2_\mu(TM)$ into the
space $\Gamma_2(TM;\mu)$ of all $L^2(\mu)$-sections of $TM$
(Theorem \ref{thm:iota_isometric}). As a direct corollary, any weighted Riemannian
manifold is infinitesimally Hilbertian (Theorem \ref{thm:UiH_Riemannian}).
\smallskip

Finally, in Section \ref{s:alternative_proof} we provide an alternative proof
of Theorem \ref{thm:UiH_Riemannian}, which does not rely upon Theorem
\ref{thm:iota_isometric}. This approach combines the analogue of Theorem
\ref{thm:UiH_Riemannian} for the Euclidean space proven in \cite{GP16_Behaviour}
with a localisation argument.
Nonetheless, we preferred to follow the first approach in order to place the emphasis
on Theorem \ref{thm:iota_isometric}, because of its independent interest.
\bigskip

\noindent\textbf{Acknowledgements.} The authors would like to acknowledge
Nicola Gigli and Martin Kell for the useful comments
and suggestions about this paper.
\section{Preliminaries on metric measure spaces}\label{s:preliminaries}
\subsection{Notation on metric spaces}
Consider a metric space $(\X,\sfd)$. Given any $x\in\X$ and $r>0$,
we denote by $B^\X_r(x)$ the open ball in $(\X,\sfd)$ with center
$x$ and radius $r$. More generally, we denote by $B^\X_r(E)$ the
$r$-neighbourhood of any set $E\subseteq\X$. We shall sometimes
work with metric spaces having the property that the closure of any
ball is compact: such spaces are said to be \emph{proper}.
\medskip

We shall use the notation $\LIP(\X)$ to indicate the family of all
real-valued Lipschitz functions defined on $\X$, while $\LIP_c(\X)$
will be the set of all functions in $\LIP(\X)$ having compact support.
Given any $f\in\LIP(\X)$, let us introduce the following quantities:
\begin{itemize}
\item[$\rm i)$] \textsc{Global Lipschitz constant.}
Let $E\subseteq\X$ be any set (containing at least two elements).
Then we denote by $\Lip(f;E)$ the Lipschitz constant of $f\restr E$, i.e.
\begin{equation}
\Lip(f;E):=\sup\Bigg\{\frac{\big|f(x)-f(y)\big|}{\sfd(x,y)}
\;\Bigg|\;x,y\in E,\,x\neq y\Bigg\}.
\end{equation}
For the sake of brevity, we shall write $\Lip(f)$ instead of $\Lip(f;\X)$.
\item[$\rm ii)$] \textsc{Local Lipschitz constant.}
We define the function $\lip(f):\,\X\to[0,+\infty)$ as
\begin{equation}
\lip(f)(x):=\underset{y\to x}\lims\,\frac{\big|f(x)-f(y)\big|}{\sfd(x,y)}
\quad\text{ for every accumulation point }x\in\X
\end{equation}
and $\lip(f)(x):=+\infty$ for every isolated point $x\in\X$.
\item[$\rm iii)$] \textsc{Asymptotic Lipschitz constant.} We define the
function $\lipa(f):\,\X\to[0,+\infty)$ as
\begin{equation}
\lipa(f)(x):=\inf_{r>0}\,\Lip\big(f;B^\X_r(x)\big)
\quad\text{ for every accumulation point }x\in\X
\end{equation}
and $\lipa(x):=+\infty$ for every isolated point $x\in\X$.
\end{itemize}
It can be readily checked that $\lip(f)\leq\lipa(f)\leq\Lip(f)$
is satisfied in $\X$.
\subsection{Sobolev calculus on metric measure spaces}
For our purposes, by \emph{metric measure space} we mean any
triple $(\X,\sfd,\mm)$, where
\begin{equation}\label{eq:def_mms}\begin{split}
(\X,\sfd)&\quad\text{ is a complete and separable metric space,}\\
\mm\neq 0&\quad\text{ is a non-negative Radon measure on }(\X,\sfd).
\end{split}\end{equation}
In order to introduce the notion of Sobolev space $W^{1,2}(\X,\sfd,\mm)$
proposed by L.\ Ambrosio, N.\ Gigli and G.\ Savar\'{e} in \cite{AGS11_HeatFlow},
we need to fix some notation. We say that a continuous curve $\gamma:\,[0,1]\to\X$
is \emph{absolutely continuous} provided there exists $f\in L^1(0,1)$ such that
$\sfd(\gamma_t,\gamma_s)\leq\int_s^t f(r)\,\d r$ holds for every $t,s\in [0,1]$
with $s<t$. The minimal $1$-integrable function (in the a.e.\ sense) that can be
chosen as $f$ is called \emph{metric speed} of $\gamma$ and denoted by $|\dot\gamma|$.
As proven in \cite[Theorem 1.1.2]{AGS14_GradFlow}, it holds that
$|\dot\gamma_t|=\lim_{h\to 0}\sfd(\gamma_{t+h},\gamma_t)/|h|$
for almost every $t\in(0,1)$.
\medskip

A \emph{test plan} on $\X$ is any Borel probability measure $\ppi$
on $C([0,1],\X)$ with the following properties:
\begin{itemize}
\item[$\rm i)$] There exists a constant $C>0$ such that $(\e_t)_*\ppi\leq C\mm$
holds for every $t\in[0,1]$, where the evaluation map $\e_t:\,C([0,1],\X)\to\X$
is given by $\e_t(\gamma):=\gamma_t$ and $(\e_t)_*\ppi$ stands for the
pushforward measure of $\ppi$ under $\e_t$.
\item[$\rm ii)$] It holds that
$\int\!\!\int_0^1|\dot\gamma_t|^2\,\d t\,\d\ppi(\gamma)<+\infty$, with the
convention that $\int_0^1|\dot\gamma_t|^2\,\d t:=+\infty$ when the curve
$\gamma$ is not absolutely continuous.
\end{itemize}
In particular, any test plan is concentrated on the family
of all absolutely continuous curves on $\X$.
\begin{definition}[Sobolev space \cite{AGS11_HeatFlow}]\label{def:Sobolev_space}
We define the \emph{Sobolev space} $W^{1,2}(\X,\sfd,\mm)$ as the set
of all functions $f\in L^2(\mm)$ with the following property: there
exists $G\in L^2(\mm)$ such that
\begin{equation}\label{eq:ineq_Sobolev_space}
\int\big|f(\gamma_1)-f(\gamma_0)\big|\,\d\ppi(\gamma)\leq
\int\!\!\!\int_0^1 G(\gamma_t)\,|\dot\gamma_t|\,\d t\,\d\ppi(\gamma)
\quad\text{ for every test plan }\ppi\text{ on }\X.
\end{equation}
Any such function $G$ is said to be a \emph{weak upper gradient} of $f$.
The minimal weak upper gradient of the function $f$ -- intended in the
$\mm$-a.e.\ sense -- is denoted by $|Df|$.
\end{definition}

The Sobolev space $W^{1,2}(\X,\sfd,\mm)$ is a Banach space if endowed with the norm
\begin{equation}\label{eq:norm_Sobolev_space}
{\|f\|}_{W^{1,2}(\X,\sfd,\mm)}:=\Big({\|f\|}^2_{L^2(\mm)}
+{\big\||Df|\big\|}^2_{L^2(\mm)}\Big)^{1/2}
\quad\text{ for every }f\in W^{1,2}(\X,\sfd,\mm),
\end{equation}
but in general it is not a Hilbert space. For this reason, the following definition
is meaningful:
\begin{definition}[Infinitesimal Hilbertianity]\label{def:iH}
We say that the metric measure space $(\X,\sfd,\mm)$ is \emph{infinitesimally Hilbertian}
provided its associated Sobolev space $W^{1,2}(\X,\sfd,\mm)$ is a Hilbert space.
\end{definition}

An important property of minimal weak upper gradients is
their lower semicontinuity (cf.\ \cite{AGS11_HeatFlow}):
\begin{proposition}\label{prop:lsc_mwug}
Let $(f_n)_{n\in\N}\subseteq W^{1,2}(\X,\sfd,\mm)$ satisfy $f_n\to f$ in $L^2(\mm)$
for some $f\in L^2(\mm)$. Suppose also that $|Df_n|\weakto G$ weakly in $L^2(\mm)$
for some $G\in L^2(\mm)$. Then $f\in W^{1,2}(\X,\sfd,\mm)$ and the inequality
$|Df|\leq G$ holds $\mm$-a.e.\ in $\X$.
\end{proposition}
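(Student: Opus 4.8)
The plan is to verify directly that $G$ is a weak upper gradient of $f$, and then to conclude $|Df|\le G$ by the very minimality of the minimal weak upper gradient. So fix an arbitrary test plan $\ppi$ on $\X$. Since each $f_n$ lies in $W^{1,2}(\X,\sfd,\mm)$ with $|Df_n|$ among its weak upper gradients, the defining inequality \eqref{eq:ineq_Sobolev_space} gives
\[
\int\big|f_n(\gamma_1)-f_n(\gamma_0)\big|\,\d\ppi(\gamma)\le
\int\!\!\!\int_0^1 |Df_n|(\gamma_t)\,|\dot\gamma_t|\,\d t\,\d\ppi(\gamma)
\qquad\text{for every }n\in\N,
\]
and it is enough to pass to the limit as $n\to\infty$ on both sides.

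For the right-hand side, I would note that the functional
\[
h\longmapsto L(h):=\int\!\!\!\int_0^1 h(\gamma_t)\,|\dot\gamma_t|\,\d t\,\d\ppi(\gamma)
\]
is linear and bounded on $L^2(\mm)$: by Cauchy--Schwarz with respect to the measure $\d t\,\d\ppi$, by Fubini's theorem, and by the bounded-compression property $(\e_t)_*\ppi\le C\mm$ of test plans, one gets $|L(h)|\le\big(C\int\!\!\int_0^1|\dot\gamma_t|^2\,\d t\,\d\ppi\big)^{1/2}\,{\|h\|}_{L^2(\mm)}$, the constant in brackets being finite by property $\rm ii)$ of test plans. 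Hence the weak convergence $|Df_n|\weakto G$ in $L^2(\mm)$ forces $L(|Df_n|)\to L(G)$, i.e.\ the right-hand side above converges to $\int\!\!\!\int_0^1 G(\gamma_t)\,|\dot\gamma_t|\,\d t\,\d\ppi(\gamma)$. (Observe also that $G\ge 0$ holds $\mm$-a.e., since the cone of non-negative functions of $L^2(\mm)$ is convex and strongly closed, thus weakly closed, and contains every $|Df_n|$.)

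For the left-hand side, the key remark is that composition with the evaluation maps $\e_0,\e_1$ is bounded from $L^2(\mm)$ to $L^2(\ppi)$: again from $(\e_i)_*\ppi\le C\mm$ we obtain ${\|h\circ\e_i\|}_{L^2(\ppi)}\le\sqrt C\,{\|h\|}_{L^2(\mm)}$ for $i\in\{0,1\}$. Therefore $f_n\to f$ in $L^2(\mm)$ yields $f_n\circ\e_i\to f\circ\e_i$ in $L^2(\ppi)$, hence in $L^1(\ppi)$ as $\ppi$ is a probability measure; by the triangle inequality this gives $\big|f_n(\gamma_1)-f_n(\gamma_0)\big|\to\big|f(\gamma_1)-f(\gamma_0)\big|$ in $L^1(\ppi)$, so the left-hand side converges to $\int\big|f(\gamma_1)-f(\gamma_0)\big|\,\d\ppi(\gamma)$. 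Passing to the limit in the displayed inequality, we conclude that $G$ satisfies \eqref{eq:ineq_Sobolev_space} for $f$ and the given $\ppi$; since $\ppi$ was arbitrary, $f\in W^{1,2}(\X,\sfd,\mm)$ with $G$ a weak upper gradient, whence $|Df|\le G$ $\mm$-a.e.

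I do not expect any real difficulty here: the single point deserving attention is the verification that $L$ is a bounded linear functional on all of $L^2(\mm)$ --- which is precisely where both defining properties of test plans are used --- together with the companion boundedness of $h\mapsto h\circ\e_i$, which is what allows the strong $L^2$-convergence of the $f_n$ to survive composition with $\e_0,\e_1$ without passing to subsequences.
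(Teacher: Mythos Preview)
Your argument is correct and is exactly the natural one for this definition of the Sobolev class: linearity and $L^2(\mm)$-boundedness of the functional $L$ (using bounded compression and finite kinetic energy) take care of the right-hand side under weak convergence, while boundedness of $h\mapsto h\circ\e_i$ from $L^2(\mm)$ to $L^2(\ppi)$ handles the left-hand side under strong convergence. Note, however, that the paper does not supply a proof of this proposition at all: it is quoted as a known fact from \cite{AGS11_HeatFlow}. Your proof is essentially the standard verification of lower semicontinuity in that framework, so there is nothing to compare against here beyond observing that you have filled in what the paper leaves as a citation.
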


We point out that $W^{1,2}(\X,\sfd,\mm)$ contains all Lipschitz functions on $\X$
having compact support. More precisely, given any function $f\in\LIP_c(\X)$ it holds that
\begin{equation}\label{eq:|Df|_leq_lip(f)}
|Df|\leq\lip(f)\quad\text{ in the }\mm\text{-a.e.\ sense.}
\end{equation}
On proper spaces, Lipschitz functions with compact support
are \emph{dense in energy} in $W^{1,2}(\X,\sfd,\mm)$:
\begin{theorem}[Ambrosio-Gigli-Savar\'{e} \cite{AGS11_DensityLip}]\label{thm:density_energy_Lip}
Suppose $(\X,\sfd,\mm)$ is a proper metric measure space.
Fix any Sobolev function $f\in W^{1,2}(\X,\sfd,\mm)$. Then there
exists a sequence $(f_n)_{n\in\N}\subseteq\LIP_c(\X)$
such that $f_n\to f$ and $\lipa(f_n)\to|Df|$ in $L^2(\mm)$
as $n\to\infty$.
\end{theorem}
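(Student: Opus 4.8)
\smallskip
\noindent\textit{Proof strategy.} The plan is to read off one half of the statement for free from Proposition~\ref{prop:lsc_mwug}, to perform two soft reductions, and then to concentrate the whole difficulty on a single sharp energy estimate for the heat flow. First, suppose $(f_n)_n\subseteq\LIP_c(\X)$ satisfies $f_n\to f$ in $L^2(\mm)$ with $\sup_n\|\lipa(f_n)\|_{L^2(\mm)}<+\infty$. Since $|Df_n|\leq\lip(f_n)\leq\lipa(f_n)$ by \eqref{eq:|Df|_leq_lip(f)}, Proposition~\ref{prop:lsc_mwug} forces every weak $L^2(\mm)$-cluster point of $(\lipa(f_n))_n$ to be a weak upper gradient of $f$; in particular $f\in W^{1,2}(\X,\sfd,\mm)$ and $\||Df|\|_{L^2(\mm)}\leq\liminf_n\|\lipa(f_n)\|_{L^2(\mm)}$. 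Hence it is enough to construct $(f_n)_n\subseteq\LIP_c(\X)$ with $f_n\to f$ in $L^2(\mm)$ and $\limsup_n\|\lipa(f_n)\|_{L^2(\mm)}\leq\||Df|\|_{L^2(\mm)}$: once this is done, every weak cluster point of $(\lipa(f_n))_n$ must equal $|Df|$ and the $L^2(\mm)$-norms converge, so $\lipa(f_n)\to|Df|$ \emph{strongly} in $L^2(\mm)$.

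\smallskip
\noindent Next I would reduce to $f$ bounded and boundedly supported. Truncation at levels $\pm N$ gives $f^{(N)}\to f$ in $W^{1,2}(\X,\sfd,\mm)$, since $|Df^{(N)}|=\mathbf 1_{\{|f|<N\}}|Df|$; fixing $x_0\in\X$ and Lipschitz cut-offs $\eta_R$ with $\eta_R\equiv 1$ on $B^\X_R(x_0)$, $\spt\eta_R\subseteq B^\X_{2R}(x_0)$ and $\Lip(\eta_R)\leq 2/R$, one has $\eta_R f^{(N)}\to f^{(N)}$ in $L^2(\mm)$ with $|D(\eta_R f^{(N)})|\leq\eta_R|Df^{(N)}|+|f^{(N)}|\,\Lip(\eta_R)\to|Df^{(N)}|$ in $L^2(\mm)$, while properness guarantees that each $\eta_R f^{(N)}$ has compact support. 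A diagonal argument then reduces the statement to $f$ bounded with bounded support, hence $f\in L^1(\mm)$.

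\smallskip
\noindent For the core I would introduce the relaxed Cheeger energy
\[
\mathsf{Ch}(h):=\tfrac12\,\inf\Big\{\liminf_{n\to\infty}\int_\X\lipa(h_n)^2\,\d\mm\ :\ (h_n)_n\subseteq\LIP_c(\X),\ h_n\to h\text{ in }L^2(\mm)\Big\}.
\]
By the first paragraph, $\mathsf{Ch}(h)\geq\tfrac12\int_\X|Dh|^2\,\d\mm$ whenever $\mathsf{Ch}(h)<+\infty$, and the theorem is equivalent to the converse bound $\mathsf{Ch}(f)\leq\tfrac12\int_\X|Df|^2\,\d\mm$ for every $f\in W^{1,2}(\X,\sfd,\mm)$: unwinding the infimum supplies the desired sequence $(f_n)_n$, and one may equally well relax the functional $\lip$, the two choices agreeing by a Vitali-covering argument. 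Since $\mathsf{Ch}$ is convex, lower semicontinuous and densely defined on $L^2(\mm)$, it possesses an $L^2(\mm)$-gradient flow $(\mathsf{h}_t)_{t>0}$ -- the \emph{heat flow} -- which satisfies $\mathsf{h}_t f\to f$ in $L^2(\mm)$ as $t\downarrow0$ and maps $L^2(\mm)$ into the finite-energy domain of $\mathsf{Ch}$ for $t>0$. By lower semicontinuity of $\mathsf{Ch}$, the required bound reduces to $\limsup_{t\downarrow0}\mathsf{Ch}(\mathsf{h}_t f)\leq\tfrac12\int_\X|Df|^2\,\d\mm$.

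\smallskip
\noindent This last estimate is the heart of the matter, and I expect it to be by far the hardest point: abstract gradient-flow theory only controls $\mathsf{Ch}(\mathsf{h}_t f)$ by quantities blowing up as $t\downarrow0$, so the \emph{nonlocal} information on $f$ encoded in \eqref{eq:ineq_Sobolev_space} has to be transferred into a bound on the \emph{local} relaxed energy $\mathsf{Ch}(\mathsf{h}_t f)$. The bridge is the optimal-transport side of the theory: for $f$ a probability density of finite energy, Kuwada's lemma -- proved via Kantorovich duality and the Hopf--Lax semigroup $Q_s g(x)=\inf_{y\in\X}\big(g(y)+\tfrac{1}{2s}\,\sfd(x,y)^2\big)$ -- shows that $t\mapsto\mu_t:=(\mathsf{h}_t f)\,\mm$ is a $2$-absolutely continuous curve in the Wasserstein space with $\int_0^T|\dot\mu_t|^2\,\d t\leq\int_0^T\!\!\int_\X\frac{|D\mathsf{h}_t f|^2}{\mathsf{h}_t f}\,\d\mm\,\d t$; since the heat flow does not increase the $L^\infty$-norm, $\mathsf{h}_t f$ stays bounded, so the superposition principle represents $(\mu_t)$ by a genuine test plan $\ppi$ on $\X$ with $(\e_t)_*\ppi=\mu_t$ whose trajectories have speed matching $|\dot\mu_t|$. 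Feeding $\ppi$ into the defining inequality \eqref{eq:ineq_Sobolev_space} of $W^{1,2}(\X,\sfd,\mm)$ for a suitable function of $f$ and letting $T\downarrow0$ then yields precisely the bound $\limsup_{t\downarrow0}\mathsf{Ch}(\mathsf{h}_t f)\leq\tfrac12\int_\X|Df|^2\,\d\mm$; the general $f$ of the previous paragraph is recovered by a further, routine truncation and approximation argument. Combining the four steps proves the theorem.
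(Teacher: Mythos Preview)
The paper does not prove this theorem: it is stated with attribution to \cite{AGS11_DensityLip} and then used as a black box (in the proof of Proposition~\ref{prop:density_energy_C1}). There is therefore no ``paper's own proof'' to compare against.

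That said, your outline does follow the architecture of the original Ambrosio--Gigli--Savar\'e argument: the lower-semicontinuity reduction via Proposition~\ref{prop:lsc_mwug}, the truncation and cut-off to bounded compactly supported data (where properness is used exactly as you say), the introduction of the relaxed Cheeger energy and its $L^2$-gradient flow, and Kuwada's duality producing a Wasserstein-absolutely-continuous curve that one lifts to a test plan via superposition. Where your sketch becomes genuinely thin is the final paragraph. ``Feeding $\ppi$ into \eqref{eq:ineq_Sobolev_space} for a suitable function of $f$ and letting $T\downarrow 0$'' does not by itself yield $\limsup_{t\downarrow 0}\mathsf{Ch}(\mathsf h_t f)\leq\tfrac12\int|Df|^2\,\d\mm$: the actual mechanism passes through the entropy functional and its dissipation. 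One shows that $t\mapsto\mathsf h_t f\,\mm$ satisfies the energy-dissipation inequality for the entropy in $(\mathscr P_2(\X),W_2)$, with the metric slope of the entropy bounded above by the square root of the Fisher information; the weak-upper-gradient hypothesis on $f$ enters when controlling the entropy production along the test plan (the ``suitable function'' is essentially $\log(\mathsf h_t f)$), and it is the comparison of the two dissipation formulas that closes the estimate. As a roadmap your proposal is sound, but that last step would need substantial expansion before it counts as a proof.
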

\subsection{Abstract tangent and cotangent modules}
Consider a metric measure space $(\X,\sfd,\mm)$. We assume the reader to
be familiar with the language of \emph{$L^2(\mm)$-normed $L^\infty(\mm)$-modules},
which has been developed in the papers \cite{G18_NonSmooth, G17_Lecture}.
\medskip

We just recall that there is a unique couple $\big(L^2_\mm(T^*\X),\d\big)$
-- where $L^2_\mm(T^*\X)$ is an $L^2(\mm)$-normed $L^\infty(\mm)$-module called
\emph{cotangent module} and $\d:\,W^{1,2}(\X,\sfd,\mm)\to L^2_\mm(T^*\X)$ is a
linear operator called \emph{differential} --
such that the following two conditions are satisfied:
\begin{itemize}
\item[$\rm i)$] It holds that $|\d f|=|Df|$ in the $\mm$-a.e.\ sense
for every $f\in W^{1,2}(\X,\sfd,\mm)$.
\item[$\rm ii)$] The set $\big\{\d f\,:\,f\in W^{1,2}(\X,\sfd,\mm)\big\}$
generates $L^2_\mm(T^*\X)$ in the sense of modules.
\end{itemize}
The module dual of $L^2_\mm(T^*\X)$ is called \emph{tangent module} and
denoted by $L^2_\mm(T\X)$.
\medskip

A fundamental property of the differential -- which follows from Proposition
\ref{prop:lsc_mwug} -- is that it is a closed operator;
cf.\ \cite[Theorem 2.2.9]{G18_NonSmooth}:
\begin{proposition}[Closure of $\d$]\label{prop:closure_diff}
Let $(f_n)_{n\in\N}\subseteq W^{1,2}(\X,\sfd,\mm)$ be a sequence satisfying
\begin{equation}\label{eq:closure_d}\begin{split}
f_n\weakto f&\quad\text{ weakly in }L^2(\mm),\\
\d f_n\weakto\omega&\quad\text{ weakly in }L^2_\mm(T^*\X),
\end{split}\end{equation}
for some $f\in L^2(\mm)$ and $\omega\in L^2_\mm(T^*\X)$.
Then $f\in W^{1,2}(\X,\sfd,\mm)$ and $\d f=\omega$.
\end{proposition}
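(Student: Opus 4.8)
The plan is to upgrade the two weak convergences to strong ones by a convexity argument, and then to conclude by combining the completeness of the Banach space $W^{1,2}(\X,\sfd,\mm)$ with the facts that the differential $\d$ is linear and that $|\d h|=|Dh|$ $\mm$-a.e.\ for every Sobolev function $h$. First I would pass to the product Banach space $\mathcal B:=L^2(\mm)\times L^2_\mm(T^*\X)$, equipped with the norm $\|(h,\eta)\|_{\mathcal B}:=\big(\|h\|_{L^2(\mm)}^2+\|\eta\|_{L^2_\mm(T^*\X)}^2\big)^{1/2}$. Since continuous linear functionals on $\mathcal B$ are exactly pairs of continuous linear functionals on the two factors, the hypotheses amount to saying that $(f_n,\d f_n)\weakto(f,\omega)$ weakly in $\mathcal B$. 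By Mazur's lemma there exist finite convex combinations $g_n:=\sum_k\lambda^{(n)}_k\,f_k$, with $\lambda^{(n)}_k\geq 0$ and $\sum_k\lambda^{(n)}_k=1$, such that $\big(g_n,\sum_k\lambda^{(n)}_k\,\d f_k\big)\to(f,\omega)$ strongly in $\mathcal B$; since $\d$ is linear, $\sum_k\lambda^{(n)}_k\,\d f_k=\d g_n$, so we have produced $(g_n)_n\subseteq W^{1,2}(\X,\sfd,\mm)$ with $g_n\to f$ in $L^2(\mm)$ and $\d g_n\to\omega$ in $L^2_\mm(T^*\X)$.

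Next I would observe that, using $|\d h|=|Dh|$ together with the linearity of $\d$, one has $\big\||D(g_n-g_m)|\big\|_{L^2(\mm)}=\|\d g_n-\d g_m\|_{L^2_\mm(T^*\X)}\to 0$, because the convergent sequence $(\d g_n)_n$ is Cauchy. Combined with $\|g_n-g_m\|_{L^2(\mm)}\to 0$, this shows that $(g_n)_n$ is a Cauchy sequence in $W^{1,2}(\X,\sfd,\mm)$; since the latter is complete, $g_n$ converges there, necessarily to $f$ (as limits in $L^2(\mm)$ are unique), so in particular $f\in W^{1,2}(\X,\sfd,\mm)$ and $g_n\to f$ in $W^{1,2}(\X,\sfd,\mm)$. (Alternatively, the membership $f\in W^{1,2}(\X,\sfd,\mm)$, together with the bound $|Df|\le|\omega|$ $\mm$-a.e., follows directly from Proposition \ref{prop:lsc_mwug}, since $|Dg_n|=|\d g_n|\to|\omega|$ in $L^2(\mm)$.) To finish, I would use that $\d$ is continuous — indeed $\|\d h\|_{L^2_\mm(T^*\X)}=\big\||Dh|\big\|_{L^2(\mm)}\le\|h\|_{W^{1,2}(\X,\sfd,\mm)}$ — so that $\d g_n\to\d f$ in $L^2_\mm(T^*\X)$; comparing with $\d g_n\to\omega$ gives $\d f=\omega$.

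The genuinely delicate point is the identity $\d f=\omega$, not the membership $f\in W^{1,2}(\X,\sfd,\mm)$: Proposition \ref{prop:lsc_mwug} only produces the one-sided inequality $|Df|\le|\omega|$, which on its own does not determine $\omega$. What makes the argument work is the upgrade to strong convergence in the first step, which allows one to control the distances $\|\d g_n-\d g_m\|_{L^2_\mm(T^*\X)}$, invoke the completeness of $W^{1,2}(\X,\sfd,\mm)$ (the point where Proposition \ref{prop:lsc_mwug} is ultimately used), and so realise $\d$ as a closed operator.
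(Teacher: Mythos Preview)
Your argument is correct: the Mazur-lemma upgrade to strong convergence, followed by the Cauchy argument in the Banach space $W^{1,2}(\X,\sfd,\mm)$ and the continuity of $\d$, is precisely the standard proof. The paper does not spell out a proof of its own but simply refers to \cite[Theorem 2.2.9]{G18_NonSmooth}, where exactly this Mazur-lemma route is taken; your remark that Proposition~\ref{prop:lsc_mwug} enters through the completeness of $W^{1,2}(\X,\sfd,\mm)$ matches the paper's comment that the result ``follows from Proposition~\ref{prop:lsc_mwug}''.
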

Furthermore, the following result is taken from \cite[Proposition 2.2.10]{G18_NonSmooth}:
\begin{proposition}[Reflexivity of the Sobolev space]\label{prop:weak_cptness_d}
The following conditions are equivalent:
\begin{itemize}
\item[$\rm i)$] The Sobolev space $W^{1,2}(\X,\sfd,\mm)$ is reflexive.
\item[$\rm ii)$] Given any bounded sequence $(f_n)_{n\in\N}\subseteq W^{1,2}(\X,\sfd,\mm)$,
there exist $f\in W^{1,2}(\X,\sfd,\mm)$ and a subsequence
$(f_{n_k})_{k\in\N}$ such that $(f_{n_k},\d f_{n_k})\weakto(f,\d f)$
weakly in $L^2(\mm)\times L^2_\mm(T^*\X)$.
\end{itemize}
In particular, if $L^2_\mm(T^*\X)$ is reflexive then $W^{1,2}(\X,\sfd,\mm)$ is reflexive.
\end{proposition}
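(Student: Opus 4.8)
The plan is to realise the Sobolev space as a closed subspace of a suitable Banach space and then argue by soft functional analysis. First I would consider the linear map
\[
\iota\colon W^{1,2}(\X,\sfd,\mm)\longrightarrow L^2(\mm)\times L^2_\mm(T^*\X),\qquad\iota(f):=(f,\d f),
\]
where the product is endowed with the norm $\|(u,\omega)\|:=\big(\|u\|_{L^2(\mm)}^2+\|\omega\|_{L^2_\mm(T^*\X)}^2\big)^{1/2}$. Since $\|\d f\|_{L^2_\mm(T^*\X)}=\big\||\d f|\big\|_{L^2(\mm)}=\big\||Df|\big\|_{L^2(\mm)}$, the definition \eqref{eq:norm_Sobolev_space} of the Sobolev norm shows that $\iota$ is a linear isometry onto its image $V:=\iota\big(W^{1,2}(\X,\sfd,\mm)\big)$. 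Moreover $V$ is a closed subspace: if $\iota(f_n)\to(f,\omega)$ strongly in the product, then in particular $f_n\weakto f$ in $L^2(\mm)$ and $\d f_n\weakto\omega$ in $L^2_\mm(T^*\X)$, whence Proposition \ref{prop:closure_diff} yields $f\in W^{1,2}(\X,\sfd,\mm)$ and $\omega=\d f$, i.e.\ $(f,\omega)\in V$.

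Next I would record two standard facts. \emph{(a)} Because $\iota$ is a bi-Lipschitz linear bijection onto $V$ and every continuous functional on $V$ extends, by Hahn--Banach, to a continuous functional on $L^2(\mm)\times L^2_\mm(T^*\X)$, a sequence $(f_n)$ satisfies $f_n\weakto f$ weakly in $W^{1,2}(\X,\sfd,\mm)$ if and only if $(f_n,\d f_n)\weakto(f,\d f)$ weakly in $L^2(\mm)\times L^2_\mm(T^*\X)$. \emph{(b)} By the Eberlein--\v{S}mulian theorem together with Kakutani's criterion, a Banach space is reflexive if and only if every bounded sequence in it admits a weakly convergent subsequence.

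Granting these, the equivalence is immediate. If $\rm i)$ holds, then $W^{1,2}(\X,\sfd,\mm)$ is reflexive, so any bounded sequence $(f_n)$ therein has a subsequence with $f_{n_k}\weakto f$ weakly in $W^{1,2}(\X,\sfd,\mm)$ for some $f$; by \emph{(a)} this is precisely the assertion of $\rm ii)$. Conversely, if $\rm ii)$ holds and $(f_n)\subseteq W^{1,2}(\X,\sfd,\mm)$ is bounded, then $\rm ii)$ combined with \emph{(a)} produces a subsequence converging weakly in $W^{1,2}(\X,\sfd,\mm)$, so by \emph{(b)} the space is reflexive, i.e.\ $\rm i)$ holds. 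For the final clause: if $L^2_\mm(T^*\X)$ is reflexive then so is the product $L^2(\mm)\times L^2_\mm(T^*\X)$ (a finite product of reflexive spaces, $L^2(\mm)$ being Hilbert), hence so is its closed subspace $V$, and therefore so is the isometric copy $W^{1,2}(\X,\sfd,\mm)$; equivalently, one can verify $\rm ii)$ directly by extracting a weakly convergent subsequence of $(f_n,\d f_n)$ in the reflexive product space and invoking Proposition \ref{prop:closure_diff} to identify the limit as $(f,\d f)$.

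I do not anticipate a genuine obstacle here: the content is entirely functional-analytic. The one point deserving attention is that $L^2_\mm(T^*\X)$ is, a priori, merely an $L^2(\mm)$-normed $L^\infty(\mm)$-module with no reflexivity at hand — which is exactly why one cannot extract weakly convergent subsequences directly in the ambient product and must instead route everything through the isometric embedding $\iota$ and the closedness of the differential.
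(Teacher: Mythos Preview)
The paper does not prove this proposition; it simply cites \cite[Proposition 2.2.10]{G18_NonSmooth}. Your argument is correct and is the standard one: realise $W^{1,2}(\X,\sfd,\mm)$ as a closed subspace of $L^2(\mm)\times L^2_\mm(T^*\X)$ via the isometry $f\mapsto(f,\d f)$ (closedness coming from Proposition~\ref{prop:closure_diff}), transfer weak convergence through this embedding by Hahn--Banach, and invoke Eberlein--\v{S}mulian. This is almost certainly the argument appearing in the cited reference as well, so there is nothing to compare.
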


Finally, we point out that
\begin{equation}\label{eq:characterisation_W12_Sobolev}
W^{1,2}(\X,\sfd,\mm)\text{ is a Hilbert space}
\quad\Longleftrightarrow\quad L^2_\mm(T\X)\text{ is a Hilbert module,}
\end{equation}
as proven in \cite[Proposition 2.3.17]{G18_NonSmooth}.
\section{Some properties of Finsler manifolds}\label{s:Finsler}
\subsection{Definition and basic results}
For our purposes, by \emph{manifold} we shall always mean a connected
differentiable manifold of class $C^\infty$. Given a manifold $M$ and
a point $x\in M$, we denote by $T_x M$ the tangent space of $M$ at $x$
and by $\exp_x$ the exponential map at $x$. We make use of the notation
$TM=\bigsqcup_{x\in M}T_x M$ to indicate the tangent bundle of $M$.
Moreover, we denote by $T^*_x M$ and $T^*M$ the cotangent space of $M$ at $x$
and the cotangent bundle of $M$, respectively.
We now briefly report the definition of Finsler structure
over a manifold, referring to the monograph \cite{BCS12} for a thorough account about this topic.
\medskip

Let $V$ be a given finite-dimensional vector space over $\R$. Then a \emph{Minkowski norm}
on $V$ is any functional $F:\,V\to[0,+\infty)$ satisfying the following properties:
\begin{itemize}
\item[$\rm i)$] \textsc{Positive definiteness.} Given any $v\in V$,
we have that $F(v)=0$ if and only if $v=0$.
\item[$\rm ii)$] \textsc{Triangle inequality.} It holds that $F(v+w)\leq F(v)+F(w)$
for every $v,w\in V$.
\item[$\rm iii)$] \textsc{Positive homogeneity.} We have that $F(\lambda v)=\lambda F(v)$
for every $v\in V$ and $\lambda\geq 0$.
\item[$\rm iv)$] \textsc{Regularity.} The function $F$ is continuous on $V$
and of class $C^\infty$ on $V\setminus\{0\}$.
\item[$\rm v)$] \textsc{Strong convexity.} Given any $v\in V\setminus\{0\}$,
it holds that the quadratic form
\begin{equation}\label{eq:quadratic_form_F}
V\ni w\longmapsto\frac{1}{2}\,\d^2(F^2)_v[w,w]
\end{equation}
is positive definite. (The expression in \eqref{eq:quadratic_form_F} stands
for the second differential of $F^2$ at $v$.)
\end{itemize}
In particular, any Minkowski norm is an asymmetric norm.
\begin{definition}[Finsler manifold]\label{def:Finsler_manifold}
A \emph{Finsler manifold} is any couple $(M,F)$, where $M$ is a
given manifold and $F:\,TM\to[0,+\infty)$ is a continuous function
satisfying the following properties:
\begin{itemize}
\item[$\rm i)$] The function $F$ is of class $C^\infty$ on $TM\setminus\{0\}$.
\item[$\rm ii)$] The functional $F(x,\cdot):\,T_x M\to[0,+\infty)$ is
a Minkowski norm for every $x\in M$.
\end{itemize}
Moreover, we say that $(M,F)$ is \emph{reversible} provided each
function $F(x,\cdot)$ is symmetric, i.e.
\begin{equation}\label{eq:reversible_Finsler}
F(x,-v)=F(x,v)\quad\text{ for every }x\in M\text{ and }v\in T_x M.
\end{equation}
Condition \eqref{eq:reversible_Finsler} is equivalent to requiring that
each $F(x,\cdot)$ is a (symmetric) norm on $T_x M$.
\end{definition}

We point out that any Riemannian manifold is a special case of
reversible Finsler manifold.
(This is an abuse of notation. More precisely: if $(M,g)$ is a Riemannian
manifold, then $(M,F)$ is a reversible Finsler manifold,
where we set $F(x,v):=g_x(v,v)^{1/2}$ for every $x\in M$ and $v\in T_x M$.)
\begin{definition}[Finsler distance]\label{def:Finsler_distance}
Let $(M,F)$ be a reversible Finsler manifold. Given any piecewise
$C^1$ curve $\gamma:\,[0,1]\to M$, we define its \emph{Finsler length} as
\begin{equation}\label{eq:def_Finsler_length}
\ell_F(\gamma):=\int_0^1 F(\gamma_t,\dot\gamma_t)\,\d t.
\end{equation}
Then we define the \emph{Finsler distance} $\sfd_F(x,y)$ between two points $x,y\in M$ as
\begin{equation}\label{eq:def_Finsler_distance}
\sfd_F(x,y):=\inf\Big\{\ell_F(\gamma)\;\Big|\;\gamma:\,[0,1]\to M\text{ piecewise }C^1
\text{ with }\gamma_0=x\text{ and }\gamma_1=y\Big\}.
\end{equation}
A \emph{Finsler geodesic} is any $C^1$-curve on $M$ that is locally
a stationary point of the length functional.
\end{definition}
\begin{remark}{\rm
When $(M,F)$ is a (not reversible) Finsler manifold, one has that the
formula \eqref{eq:def_Finsler_distance} defines a quasi-distance on $M$
rather than a distance in the usual sense. Our main approximation result
-- namely Theorem \ref{thm:C1_approx_Lip} below -- still holds true even in the
case of general Finsler manifolds (this can be achieved with minor modifications
of the arguments that we shall see). Nevertheless, we prefer to focus our
attention on the reversible case, the reason being that the language of Sobolev
calculus and (co)tangent modules is so far available just for metric structures.
\fr}\end{remark}

For a proof of the ensuing result in the Finsler case,
we refer e.g.\ to \cite[Theorem 6.6.1]{BCS12}.
\begin{theorem}[Hopf-Rinow]\label{thm:Hopf-Rinow}
Let $(M,F)$ be a reversible Finsler manifold.
Then the following four conditions are equivalent:
\begin{itemize}
\item[$\rm i)$] The Finsler manifold $(M,F)$ is \emph{geodesically complete},
i.e.\ any constant speed geodesic can be extended to a geodesic defined
on the whole real line.
\item[$\rm ii)$] The metric space $(M,\sfd_F)$ is complete.
\item[$\rm iii)$] Given any $x\in M$, it holds that the exponential map $\exp_x$
is defined on the whole $T_x M$.
\item[$\rm iv)$] The metric space $(M,\sfd_F)$ is proper.
\end{itemize}
\end{theorem}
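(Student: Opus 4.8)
The plan is to prove the four conditions equivalent via the cycle $\rm(iv)\Rightarrow(ii)\Rightarrow(i)\Leftrightarrow(iii)\Rightarrow(iv)$, the only substantial implication being the last one. The step $\rm(iv)\Rightarrow(ii)$ is immediate, since in a proper space a Cauchy sequence is bounded, hence eventually trapped in a compact closed ball, hence convergent. For $\rm(ii)\Rightarrow(i)$, if a constant-speed geodesic $\gamma\colon[0,a)\to M$ with $F(\gamma_t,\dot\gamma_t)\equiv c$ did not extend past $a<+\infty$, then from $\sfd_F(\gamma_s,\gamma_t)\le c|t-s|$ the points $\gamma_t$ would form a Cauchy family as $t\uparrow a$, converging by $\rm(ii)$ to some $p\in M$; invoking the strong convexity axiom $\rm(v)$, the geodesic equation is a well-posed second-order ODE, so smooth dependence on initial data together with a compactness argument yields $\eps>0$ and a neighbourhood of $p$ from each point of which every unit-$F$-speed geodesic is defined on $(-\eps,\eps)$, and flowing forward from $(\gamma_t,\dot\gamma_t)$ for $t$ close to $a$ extends $\gamma$, a contradiction. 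Finally $\rm(i)\Leftrightarrow(iii)$ is a reformulation: $t\mapsto\exp_x(tv)$ is the constant-speed geodesic with initial data $(x,v)$, so the assertion that $\exp_x$ is defined on all of $T_xM$ for every $x$ says precisely that every geodesic extends to all of $\R$.

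For $\rm(iii)\Rightarrow(iv)$ I would run the classical Hopf--Rinow argument. \textbf{Step 1 (minimizing geodesics).} Fix $x\in M$ with $\exp_x$ defined on all of $T_xM$; I claim every $y\in M$ is joined to $x$ by a unit-speed geodesic of the form $t\mapsto\exp_x(tv)$, $t\in[0,\rho]$, $\rho:=\sfd_F(x,y)$, $F(x,v)=1$. Choose $\delta>0$ small enough that $\exp_x$ restricts to a diffeomorphism on the closed $F$-ball of radius $\delta$ in $T_xM$, so that $S:=\exp_x(\{F(x,\cdot)=\delta\})$ is a compact sphere; pick $p_0\in S$ minimizing $\sfd_F(\cdot,y)$ and observe, using that any curve from $x$ to $y$ must meet $S$ together with the triangle inequality, that $\sfd_F(p_0,y)=\rho-\delta$. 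Writing $p_0=\exp_x(\delta v)$ with $F(x,v)=1$ and $\gamma_t:=\exp_x(tv)$, one shows that $A:=\{t\in[0,\rho]:\sfd_F(\gamma_t,y)=\rho-t\}$ is non-empty ($\delta\in A$), closed, and relatively open in $[0,\rho]$; by connectedness $A=[0,\rho]$, so $\gamma_\rho=y$ and $\ell_F(\gamma)=\rho$.

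\textbf{Step 2 (propriety).} Granting Step 1, for any $r>0$ the closed ball $\bar B^M_r(x)$ is contained in $\exp_x(\{v\in T_xM:F(x,v)\le r\})$; the latter is the image under the globally defined continuous map $\exp_x$ of a closed ball in the finite-dimensional normed space $(T_xM,F(x,\cdot))$, hence compact, and $\bar B^M_r(x)$ is a closed subset of it, hence compact. For a general $z\in M$ one has $\bar B^M_r(z)\subseteq\bar B^M_{r+\sfd_F(x,z)}(x)$, again compact, so $(M,\sfd_F)$ is proper. This closes the cycle and proves all four conditions equivalent.

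\textbf{Main obstacle.} The delicate point is the relative openness of $A$ in Step 1: following the usual proof one applies the sphere-crossing argument now at the point $\gamma_{t_0}$ (for $t_0\in A$) and concludes that a concatenation of two minimizing segments of total length $\rho$ must be a smooth geodesic, hence coincides with $\gamma$ on a slightly larger interval; this uses both the symmetry of $\sfd_F$ and the two-sided triangle inequality, which is exactly where the reversibility hypothesis enters (without it $\sfd_F$ is merely a quasi-distance). The supporting facts -- that the Euler--Lagrange equation of $\ell_F$ is, by axiom $\rm(v)$, a smooth well-posed second-order system with solution map $\exp$, that $\exp_x$ is $C^1$ and a local diffeomorphism near $0_x\in T_xM$, and that geodesics are consequently locally length-minimizing (the Finslerian Gauss lemma) -- are standard and I would quote them from \cite{BCS12}.
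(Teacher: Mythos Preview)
The paper does not prove this theorem at all; immediately before the statement it writes ``For a proof of the ensuing result in the Finsler case, we refer e.g.\ to \cite[Theorem 6.6.1]{BCS12}'' and then moves on. Your sketch is precisely the classical Hopf--Rinow argument as presented in that reference, and it is correct, including your identification of the relative openness of $A$ as the only delicate step and of reversibility as the place where symmetry of $\sfd_F$ (and hence the two-sided triangle inequality) is genuinely needed. One small slip of the pen: in the concatenation step the two minimizing segments join $x$ to the auxiliary point $q$ (via $\gamma_{t_0}$) and have total length $t_0+\delta'=\sfd_F(x,q)$, not $\rho$; this does not affect the argument. In short, you have supplied what the paper deliberately delegates to the literature.
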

\subsection{Smooth approximation of Lipschitz functions}
In the sequel, we shall need the following result concerning the biLipschitz
behaviour of the exponential map on sufficiently small balls:
\begin{theorem}[Deng-Hou \cite{Deng02}]\label{thm:Deng-Hou}
Let $(M,F)$ be a reversible Finsler manifold. Fix a point $x\in M$
and some constant $\eps>0$. Then there exists a radius $r>0$ such
that the exponential map
\begin{equation}\label{eq:exp_diffeo}
\exp_x:\,B^{T_x M}_r(0)\longrightarrow B^M_r(x)
\end{equation}
is a $(1+\eps)$-biLipschitz $C^1$-diffeomorphism.
\end{theorem}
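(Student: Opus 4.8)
The plan is to reduce the statement to a finite-dimensional, essentially linear-algebraic fact by working in normal coordinates. Fix the point $x$ and the constant $\eps>0$. By the Hopf--Rinow-type statement (Theorem \ref{thm:Hopf-Rinow}) we need not worry about completeness here: the exponential map $\exp_x$ is smooth near $0\in T_x M$, and since $F(x,\cdot)$ is a Minkowski norm its differential at $0$ is the identity on $T_x M$. Thus, if we fix once and for all a linear isomorphism identifying $T_x M$ with $\R^n$ and a background Euclidean (Riemannian) metric in a coordinate chart around $x$, we may regard $\exp_x$ as a $C^1$ map from a ball in $\R^n$ to $\R^n$ with $\d(\exp_x)_0=\mathrm{Id}$. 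The heart of the matter is that the Finsler length of a curve near $x$ is, up to a factor as close to $1$ as we like, the Euclidean length of the same curve read in normal coordinates; concretely, I would show that for $r$ small enough both $F(x,\cdot)$ and $F(y,\cdot)$ for $y\in B^M_r(x)$ are uniformly comparable, with constant $1+\eps$, to the single norm $F(x,\cdot)$ on the identified tangent spaces.

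The key steps, in order, are as follows. \textbf{Step 1.} Using the regularity of $F$ (class $C^\infty$ on $TM\setminus\{0\}$, continuous on $TM$) and the compactness of the unit sphere $\{v\in T_xM : F(x,v)=1\}$ of the Minkowski norm at $x$, prove a uniform continuity statement: there is $r_1>0$ such that, reading everything in a fixed chart, $(1+\eps)^{-1}F(x,v)\le F(y,v)\le(1+\eps)F(x,v)$ for all $y\in B^M_{r_1}(x)$ and all $v\ne 0$. This uses positive homogeneity to reduce to the compact sphere, and continuity of $F$ jointly in $(y,v)$. \textbf{Step 2.} Shrink the radius further so that in the chart the exponential map $\exp_x$ is a $C^1$ diffeomorphism onto its image with $\d(\exp_x)_u$ as close to the identity (uniformly for $|u|\le r_2$) as we wish; this is the standard inverse-function-theorem neighbourhood, available because $\exp_x$ is $C^1$ with invertible differential at $0$. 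Choosing the tolerance in terms of $\eps$, the map $\exp_x$ distorts $F(x,\cdot)$-length of curves by at most a factor $1+\eps$ (after possibly renaming $\eps$). \textbf{Step 3.} Combine: for a piecewise-$C^1$ curve $\gamma$ in $B^M_r(x)$, write $\gamma=\exp_x\circ\tilde\gamma$ with $\tilde\gamma$ in the tangent ball; then $\ell_F(\gamma)=\int_0^1 F(\gamma_t,\dot\gamma_t)\,\d t$ is, by Steps 1--2, within a factor $1+\eps$ of $\int_0^1 F\big(x,\dot{\tilde\gamma}_t\big)\,\d t$, which is exactly the $F(x,\cdot)$-length of $\tilde\gamma$ in the normed vector space $(T_xM,F(x,\cdot))$. \textbf{Step 4.} In a finite-dimensional normed space, the intrinsic (length) distance induced by straight-line segments equals the norm distance, and on a convex ball this is realised by segments; hence $\sfd_F(\exp_x u,\exp_x u')$ is within a factor $1+\eps$ of $F(x,u-u')$, for $u,u'$ in a small ball. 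Since $\exp_x u$ ranges over a metric ball $B^M_r(x)$ and $u$ over $B^{T_xM}_r(0)$ (after matching radii via the near-isometry), this is precisely the assertion that $\exp_x:B^{T_xM}_r(0)\to B^M_r(x)$ is $(1+\eps)$-biLipschitz; it is a $C^1$-diffeomorphism by Step 2.

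The main obstacle I expect is Step 1 together with the bookkeeping in Step 3: one must control $F(y,v)$ for \emph{moving} basepoint $y$ and \emph{moving} tangent vector $\dot\gamma_t$ simultaneously, and the naive estimate degenerates near $v=0$ because $F$ is only continuous (not $C^1$) at the zero section. The remedy is to never differentiate $F$ at $0$: use positive homogeneity to normalise $v$ to the unit sphere and invoke joint continuity of $F$ on the compact set $\overline{B^M_{r_1}(x)}\times\{F(x,\cdot)=1\}$, obtaining the multiplicative two-sided bound that survives being integrated along $\gamma$. A secondary subtlety is ensuring that the image of $B^{T_xM}_r(0)$ under $\exp_x$ is \emph{exactly} the metric ball $B^M_r(x)$ and not merely comparable to it; this is handled by first producing a biLipschitz diffeomorphism onto \emph{some} neighbourhood with constant $1+\eps'$, then choosing the final radius $r$ so that the distorted ball is squeezed between $B^M_{r}(x)$ and a slightly larger metric ball, absorbing the discrepancy into the constant. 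Everything else is routine: the regularity of the diffeomorphism is inherited from $F\in C^\infty$ on $TM\setminus\{0\}$ via the geodesic flow, already encoded in the classical theory we are citing.
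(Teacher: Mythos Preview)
The paper does not contain a proof of this statement: Theorem~\ref{thm:Deng-Hou} is quoted from the reference \cite{Deng02} and used as a black box throughout Sections~\ref{s:Finsler} and~\ref{s:alternative_proof}. So there is no ``paper's own proof'' to compare against.

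That said, your sketch is essentially the standard argument and would work. A couple of minor points. First, the sentence ``since $F(x,\cdot)$ is a Minkowski norm its differential at $0$ is the identity on $T_x M$'' conflates two things: the fact that $\d(\exp_x)_0=\mathrm{Id}$ has nothing to do with $F$ being a Minkowski norm --- it holds for the exponential map of any spray and follows from the definition $\exp_x(tv)=\gamma_v(t)$ with $\dot\gamma_v(0)=v$. Second, your ``secondary subtlety'' about the image being \emph{exactly} $B^M_r(x)$ is not really an obstacle: once $r$ is below the injectivity radius, radial geodesics from $x$ are the unique minimisers, so $\sfd_F(x,\exp_x v)=F(x,v)$ exactly, and $\exp_x\big(B^{T_xM}_r(0)\big)=B^M_r(x)$ on the nose without any absorption of constants. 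Your Step~1/Step~3 strategy --- normalise to the unit sphere via positive homogeneity, use joint continuity of $F$ on a compact set, then integrate the multiplicative bound along curves --- is exactly right and avoids the non-smoothness of $F$ at the zero section cleanly.
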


We now present a new result about regularisation of Lipschitz functions
on a reversible Finsler manifold $(M,F)$. Roughly speaking, it states that
any Lipschitz function $f:\,M\to\R$ can be uniformly approximated by
functions of class $C^1$ whose Lipschitz constant is locally controlled
by that of $f$. This represents a `local' variant of the approximation
theorem proven in \cite{GJR13}.
\begin{theorem}\label{thm:C1_approx_Lip}
Let $(M,F)$ be a geodesically complete, reversible Finsler manifold.
Fix a Lipschitz function $f\in\LIP(M)$ and some constants
$\delta,\eps,\lambda>0$. Then there exists a function $g\in C^1(M)$ with
$\spt(g)\subseteq B^M_\delta\big(\spt(f)\big)$ such that
\begin{equation}\label{eq:claim_C1_approx_Lip}\begin{split}
\big|g(x)-f(x)\big|&\leq\eps\\
\lipa(g)(x)&\leq\Lip\big(f;B^M_\delta(x)\big)+\lambda
\end{split}
\quad\text{ for every }x\in M.
\end{equation}
\end{theorem}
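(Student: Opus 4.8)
The plan is to combine the Deng--Hou biLipschitz exponential charts (Theorem~\ref{thm:Deng-Hou}) with the Euclidean regularisation result from \cite{GJR13} (or a standard mollification of Lipschitz functions on $\R^n$), glued together by a partition of unity that is subordinate to a carefully chosen locally finite cover by small balls. The reason we must pass through charts at all is that mollification is an intrinsically linear operation and hence has no direct meaning on a manifold; the reason the charts must be $(1+\eps')$-biLipschitz for $\eps'$ small is that we want the local Lipschitz constant of the approximant $g$ to be controlled by $\Lip\big(f;B^M_\delta(x)\big)+\lambda$ with \emph{no multiplicative loss}, only the additive error $\lambda$.

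First I would fix $\eps'>0$ tiny (to be quantified at the end in terms of $\eps,\lambda,\Lip(f)$) and, using Theorem~\ref{thm:Deng-Hou} together with geodesic completeness (which by Hopf--Rinow, Theorem~\ref{thm:Hopf-Rinow}, gives properness of $(M,\sfd_F)$), cover $M$ by countably many balls $B^M_{r_i}(x_i)$ on each of which $\exp_{x_i}$ is a $(1+\eps')$-biLipschitz $C^1$-diffeomorphism onto its image; by properness I may take this cover locally finite and with radii $r_i$ as small as I like (in particular $r_i < \delta/3$, say). Let $\{\chi_i\}$ be a $C^\infty$ partition of unity subordinate to a suitable shrinking of this cover, with $\sum_i \chi_i \equiv 1$ on a neighbourhood of $\spt(f)$ and each $\chi_i$ supported in the next slightly larger ball. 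On each chart, transport $f$ to a Lipschitz function $f_i := f\circ\exp_{x_i}$ on a Euclidean ball in $T_{x_i}M\cong\R^n$; since the chart is $(1+\eps')$-biLipschitz, $\Lip(f_i;\,\cdot\,)$ differs from the corresponding $\Lip(f;\,\cdot\,)$ by at most the factor $(1+\eps')$. Now mollify: choose $g_i := f_i * \rho_{\sigma_i}\in C^\infty$ with mollification parameter $\sigma_i>0$ so small that (a) $\|g_i - f_i\|_\infty$ is as small as we wish on the relevant compact set, and (b) for every point $y$ in the relevant region, $\Lip\big(g_i; B_{\sigma_i}(y)\big)\le \Lip\big(f_i; B_{2\sigma_i}(y)\big)$ — this is the standard fact that mollification does not increase local Lipschitz constants, up to taking a slightly larger ball. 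Pulling back, $g_i\circ\exp_{x_i}^{-1}$ is a $C^1$ function on $B^M_{r_i}(x_i)$ whose asymptotic Lipschitz constant at a point $x$ is at most $(1+\eps')^2\,\Lip\big(f; B^M_{\delta/2}(x)\big) + (\text{small})$. Finally glue: $g := \sum_i \chi_i\,\big(g_i\circ\exp_{x_i}^{-1}\big)$, which is $C^1$, is supported in $B^M_\delta(\spt f)$ by the choice of radii, and satisfies $\|g-f\|_\infty\le\eps$ because on $\spt(f)$ we have $g - f = \sum_i\chi_i\,(g_i\circ\exp_{x_i}^{-1} - f)$ and each summand is controlled.

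The delicate point, and the one I expect to be the main obstacle, is the Lipschitz estimate for the \emph{glued} function $g$, because differentiating a partition of unity produces the extra term $\sum_i (\d\chi_i)\,\big(g_i\circ\exp_{x_i}^{-1} - f\big)$, and the $\d\chi_i$ are large (of size $\sim 1/r_i$). The resolution is to note that $\sum_i \d\chi_i = \d\big(\sum_i\chi_i\big) = 0$ on the region where $\sum_i\chi_i\equiv 1$, so one may rewrite this term as $\sum_i (\d\chi_i)\,\big(g_i\circ\exp_{x_i}^{-1} - h\big)$ for \emph{any} fixed reference function $h$; choosing $h = f(x)$ (the value at the base point $x$, constant in a neighbourhood for the purpose of this algebraic manipulation) the bracket becomes $g_i\circ\exp_{x_i}^{-1}(y) - f(x)$, which at points $y$ near $x$ is bounded by $\big(\Lip(f;B^M_\delta(x)) + \text{small}\big)\cdot \sfd_F(x,y)$ times a constant absorbed into $\lambda$ — crucially, this bound is of order $\sfd_F(x,y)$, not of order $1$, so multiplying by $|\d\chi_i|\lesssim 1/r_i$ and using that only $\le N$ indices $i$ are active at $x$ (with $N$ the multiplicity of the cover) and that $\sfd_F(x,y)\le r_i$ on the support of $\chi_i$, the whole correction term is bounded by $N\cdot(\text{const})\cdot\big(\Lip(f;B^M_\delta(x))+\text{small}\big)$. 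This is still not small, so the honest fix is to instead choose the mollification scales $\sigma_i \ll r_i^2/(\text{mult.}\cdot(1+\Lip f))$ so that $\|g_i\circ\exp_{x_i}^{-1} - f\|_\infty \ll \lambda\, r_i / N$ on the support of $\chi_i$; then $|\d\chi_i|\cdot\|g_i\circ\exp_{x_i}^{-1} - f\|_\infty \ll \lambda/N$ and summing over the $\le N$ active indices keeps the total correction below $\lambda/2$. Combining this with the main term $\sum_i\chi_i\,\lipa\big(g_i\circ\exp_{x_i}^{-1}\big)(x)\le (1+\eps')^2\,\Lip(f;B^M_\delta(x)) + \lambda/2$ and choosing $\eps'$ small enough that $\big((1+\eps')^2 - 1\big)\Lip(f) < \lambda/2$ — wait, one more adjustment: absorb $\big((1+\eps')^2-1\big)\Lip(f;B^M_\delta(x))\le\big((1+\eps')^2-1\big)\Lip(f)$ into the remaining budget — yields $\lipa(g)(x)\le\Lip\big(f;B^M_\delta(x)\big)+\lambda$ for every $x\in M$, as required. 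The uniform estimate $|g-f|\le\eps$ follows a fortiori from the same choice of $\sigma_i$, and the support condition is built into the construction. This completes the proof sketch.
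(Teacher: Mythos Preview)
Your approach is essentially the same as the paper's: biLipschitz exponential charts from Theorem~\ref{thm:Deng-Hou}, mollification in each chart, and a partition-of-unity glue, with the cross term controlled by taking the mollification scale $\sigma_i$ small enough relative to $\Lip(\chi_i)$ and the local covering multiplicity. Two minor points are worth flagging. First, your detour through subtracting the constant $h=f(x)$ is unnecessary: the paper writes directly
\[
g(y)-g(z)=\underbrace{\sum_j\big[\chi_j(y)-\chi_j(z)\big]\big((g_j\circ\varphi_j)(y)-f(y)\big)}_{(\mathrm A)}
+\underbrace{\sum_j\chi_j(z)\big[(g_j\circ\varphi_j)(y)-(g_j\circ\varphi_j)(z)\big]}_{(\mathrm B)},
\]
i.e.\ it subtracts $f(y)$ (legitimate since $\sum_j\chi_j\equiv 1$), so that $(\mathrm A)$ is immediately governed by the mollification error times $\Lip(\chi_j)$ --- this is exactly your ``second fix'' without the false start. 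Second, on a non-compact manifold the covering multiplicity need not be bounded by a single $N$; the paper handles this by setting $m_i:=\max\{n_j:B_j\cap B_i\neq\emptyset\}$ and choosing the mollification parameter for index $i$ so that the error is at most $r/m_i$, which makes the sum over the $n_i$ active indices telescope to $\leq r$. Your sketch implicitly treats $N$ as global, but since you already allow $\sigma_i$ to depend on $i$, replacing $N$ by the local quantity $m_i$ fixes this at no cost. (You also need a McShane extension of $f\circ\varphi_i^{-1}$ to all of $\R^n$ before mollifying, which the paper does explicitly.)
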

\begin{proof} We divide the proof into several steps:
\smallskip

\noindent{\color{blue}\textsc{Step 1: Set-up.}} Fix any $r>0$ such that $r\leq\delta/2$ and
\begin{equation}\label{eq:condition_on_r}
(2r+r^2)\,\Lip(f)+r\leq\lambda.
\end{equation}
Theorem \ref{thm:Deng-Hou} grants that for any $x\in M$ we can
pick a radius $r_x\in(0,r)$ such that the exponential map
$\exp_x:\,B^{T_x M}_{r_x}(0)\to B^M_{r_x}(x)$ is a
$(1+r)$-biLipschitz $C^1$-diffeomorphism. Hence we can choose a
sequence $(x_i)_{i\in\N}\subseteq M$ such that the
family $(B_i)_{i\in\N}$ -- where we set $B_i:=B^M_{r_{x_i}}(x_i)$
for all $i\in\N$ -- is a locally finite open covering of $M$.
Given any $i\in\N$, we fix a linear isomorphism $I_i:\,\R^n\to T_{x_i}M$,
where $n:=\dim(M)$. Let us define the norm $\|\cdot\|_i$ on $\R^n$ as
\begin{equation}\label{eq:def_norm_i}
\|v\|_i:=F\big(x_i,I_i(v)\big)\quad\text{ for every }v\in\R^n.
\end{equation}
Since any two norms on $\R^n$ are equivalent, there exists $C_i\geq 1$ such that
\begin{equation}\label{eq:norm_i_equiv}
\frac{1}{C_i}\,\|v\|_i\leq|v|\leq C_i\,\|v\|_i\quad\text{ for every }v\in\R^n.
\end{equation}
We define the chart $\varphi_i:\,B_i\to\R^n$ as
\begin{equation}\label{eq:def_chart_phi_i}
\varphi_i(x):=(\exp_{x_i}\circ I_i)^{-1}(x)\quad\text{ for every }x\in B_i.
\end{equation}
Therefore $\varphi_i$ is a $(1+r)$-biLipschitz $C^1$-diffeomorphism from
$(B_i,\sfd_F)$ to $\big(\varphi_i(B_i),\|\cdot\|_i\big)$. Moreover, let us fix
a smooth partition of unity $(\psi_i)_{i\in\N}$ subordinated to the covering
$(B_i)_{i\in\N}$, i.e.
\begin{itemize}
\item the functions $\psi_i$ belong to $C^\infty_c(M)$,
\item $0\leq\psi_i\leq 1$ and $\spt(\psi_i)\subseteq B_i$ for every $i\in\N$,
\item $\sum_{i\in\N}\psi_i(x)=1$ holds for every $x\in M$.
\end{itemize}
Finally, for any $i\in\N$ we call $\mathcal A_i:=\{j\in\N\,:\,B_j\cap B_i\neq\emptyset\}$,
we denote by $n_i\in\N$ the cardinality of the set $\mathcal A_i$ and we
define $m_i:=\max\{n_j\,:\,j\in\mathcal A_i\}\in\N$. Then it is immediate to check that
\begin{equation}\label{eq:property_m_i}
n_i\leq m_j\quad\text{ for every }i\in\N\text{ and }j\in\mathcal A_i.
\end{equation}
{\color{blue}\textsc{Step 2: Construction of $g$.}} First of all, fix a
family $(\rho_k)_{k\in\N}$ of smooth mollifiers on $\R^n$, i.e.
\begin{itemize}
\item the functions $\rho_k$ are symmetric and belong to $C^\infty_c(\R^n)$,
\item $\rho_k\geq 0$ and $\spt(\rho_k)\subseteq B^{\R^n}_{1/k}(0)$ for every $k\in\N$,
\item $\int_{\R^n}\rho_k(v)\,\d v=1$ holds for every $k\in\N$.
\end{itemize}
For any $i\in\N$ we can choose a McShane extension
$f_i:\,\big(\R^n,\|\cdot\|_i\big)\to\R$ of $f\circ\varphi_i^{-1}:\,\varphi_i(B_i)\to\R$,
namely $f_i$ is a Lipschitz function with $\Lip(f_i)\leq(1+r)\Lip(f;B_i)$ that coincides
with $f\circ\varphi_i^{-1}$ on the set $\varphi_i(B_i)$. Now we define
$f^k_i:\,\R^n\to\R$ for any $i,k\in\N$ as
\begin{equation}\label{eq:def_f^k_i}
f^k_i(v):=(f_i*\rho_k)(v)=\int_{\R^n}f_i(v+w)\rho_k(w)\,\d w
\quad\text{ for every }v\in\R^n.
\end{equation}
It is well-known that each function $f^k_i$ is of class $C^\infty$.
Pick a sequence $(k_i)_{i\in\N}\subseteq\N$ for which
\begin{equation}\label{eq:def_k_i}\begin{split}
\frac{(1+r)\,\Lip(f;B_i)\,C_i}{k_i}&\leq\eps,\\
\frac{\Lip(\psi_i)\,(1+r)\,\Lip(f;B_i)\,C_i}{k_i}&\leq\frac{r}{m_i}
\end{split}
\quad\text{ for every }i\in\N.
\end{equation}
Then we define $g_i:=f^{k_i}_i$ for all $i\in\N$ and
\begin{equation}\label{eq:def_g}
g(x):=\sum_{i\in\N}\psi_i(x)(g_i\circ\varphi_i)(x)\quad\text{ for every }x\in M.
\end{equation}
It clearly turns out that $g$ belongs to the space $C^1(M)$.
\smallskip

\noindent{\color{blue}\textsc{Step 3: Properties of $g$.}} Given $i\in\N$ and $v\in\R^n$,
it holds that
\begin{equation}\label{eq:estimate_g_i-f_i}\begin{split}
\big|g_i(v)-f_i(v)\big|&\overset{\phantom{\eqref{eq:norm_i_equiv}}}=
\bigg|\int_{\R^n}f_i(v+w)\rho_{k_i}(w)\,\d w
-\int_{\R^n}f_i(v)\rho_{k_i}(w)\,\d w\bigg|\\
&\overset{\phantom{\eqref{eq:norm_i_equiv}}}\leq
\int_{\R^n}\big|f_i(v+w)-f_i(v)\big|\rho_{k_i}(w)\,\d w
\leq\Lip(f_i)\int_{B^{\R^n}_{1/{k_i}}(0)}\|w\|_i\,\rho_{k_i}(w)\,\d w\\
&\overset{\eqref{eq:norm_i_equiv}}\leq
\frac{(1+r)\,\Lip(f;B_i)\,C_i}{k_i}\int_{\R^n}\rho_{k_i}(w)\,\d w
=\frac{(1+r)\,\Lip(f;B_i)\,C_i}{k_i}\overset{\eqref{eq:def_k_i}}\leq\eps,
\end{split}\end{equation}
thus accordingly one has that
\[
\big|g(x)-f(x)\big|\overset{\eqref{eq:def_g}}=
\bigg|\sum_{i\in\N}\psi_i(x)(g_i\circ\varphi_i-f)(x)\bigg|
\leq\sum_{i\in\N}\psi_i(x)\big|g_i-f\circ\varphi_i^{-1}\big|\big(\varphi_i(x)\big)
\overset{\eqref{eq:estimate_g_i-f_i}}\leq\eps\sum_{i\in\N}\psi_i(x)=\eps,
\]
which proves the first line of \eqref{eq:claim_C1_approx_Lip}.
Moreover, calling $S$ the set of all $i\in\N$ such that the center of the ball $B_i$
does not lie in $B^M_r\big(\spt(f)\big)$, we have for any $i\in S$ that
\[
f\restr{B_i}\equiv 0\quad\Longrightarrow\quad
f_i\equiv 0\quad\Longrightarrow\quad g_i\equiv 0,
\]
whence accordingly $g=\sum_{i\in\N\setminus S}\psi_i\,g_i\circ\varphi_i$. This shows that
\[
\spt(g)\subseteq\bigcup_{i\in\N\setminus S}B_i\subseteq B^M_{2r}\big(\spt(f)\big)
\subseteq B^M_\delta\big(\spt(f)\big).
\]
{\color{blue}\textsc{Step 4: Properties of $\lipa(g)$.}} Given $i\in\N$ and $v,w\in\R^n$,
it holds that
\begin{equation}\label{eq:estimate_Lip(g_i)}
\big|g_i(v)-g_i(w)\big|\leq\int_{\R^n}\big|f_i(v+u)-f_i(w+u)\big|\,\rho_{k_i}(u)\,\d u
\leq(1+r)\,\Lip(f;B_i)\,\|v-w\|_i.
\end{equation}
Now fix $x\in M$ and denote $\mathcal I_x:=\{i\in\N\,:\,x\in B_i\}$.
Pick any $i\in\mathcal I_x$ and notice that $\mathcal I_x\subseteq\mathcal A_i$.
Since the set $\mathcal I_x$ is finite, we can choose a radius $s_x>0$ satisfying
$B^M_{s_x}(x)\subseteq B_j$ for all $j\in\mathcal I_x$. Hence for every
$y,z\in B^M_{s_x}(x)$ one has that
\[\begin{split}
\big|g(y)-g(z)\big|&\leq
\bigg|\sum_{j\in\mathcal I_x}\big[\psi_j(y)-\psi_j(z)\big](g_j\circ\varphi_j-f)(y)\bigg|
+\bigg|\sum_{j\in\mathcal I_x}\psi_j(z)\big[(g_j\circ\varphi_j)(y)
-(g_j\circ\varphi_j)(z)\big]\bigg|\\
&\leq\underset{=:({\rm A})}
{\underbrace{\sum_{j\in\mathcal I_x}\big|\psi_j(y)-\psi_j(z)\big|\,
\big|(g_j\circ\varphi_j-f)(y)\big|}}+
\underset{=:({\rm B})}{\underbrace{\sum_{j\in\mathcal I_x}
\psi_j(z)\,\big|(g_j\circ\varphi_j)(y)-(g_j\circ\varphi_j)(z)\big|}}.
\end{split}\]
We separately estimate the quantities (A) and (B). Firstly, observe that
\[\begin{split}
({\rm A})&\overset{\eqref{eq:estimate_g_i-f_i}}\leq\sfd_F(y,z)\sum_{j\in\mathcal I_x}
\Lip(\psi_j)\frac{(1+r)\,\Lip(f;B_j)\,C_j}{k_j}\\
&\overset{\phantom{\eqref{eq:estimate_g_i-f_i}}}\leq\sfd_F(y,z)\sum_{j\in\mathcal A_i}
\frac{\Lip(\psi_j)\,(1+r)\,\Lip(f;B_j)\,C_j}{k_j}
\overset{\eqref{eq:def_k_i}}\leq\sfd_F(y,z)\sum_{j\in\mathcal A_i}\frac{r}{m_j}
\overset{\eqref{eq:property_m_i}}\leq r\,\sfd_F(y,z).
\end{split}\]
Furthermore, we have that
\[\begin{split}
({\rm B})&\overset{\eqref{eq:estimate_Lip(g_i)}}\leq
(1+r)\sum_{j\in\mathcal I_x}\psi_j(z)\,\Lip(f;B_j)\big\|\varphi_j(y)-\varphi_j(z)\big\|_j
\leq(1+r)^2\,\sfd_F(y,z)\sum_{j\in\mathcal I_x}\psi_j(z)\,\Lip(f;B_j)\\
&\overset{\phantom{\eqref{eq:estimate_Lip(g_i)}}}\leq
(1+r)^2\,\sfd_F(y,z)\,\Lip\big(f;B^M_{2r}(x)\big)\sum_{j\in\mathcal I_x}\psi_j(z)\\
&\overset{\phantom{\eqref{eq:estimate_Lip(g_i)}}}\leq
\Big[\Lip\big(f;B^M_\delta(x)\big)+(2r+r^2)\,\Lip(f)\Big]\sfd_F(y,z).
\end{split}\]
Therefore we finally conclude that for any $y,z\in B^M_{s_x}(x)$ it holds that
\[\begin{split}
\big|g(y)-g(z)\big|\leq
\Big[\Lip\big(f;B^M_\delta(x)\big)+(2r+r^2)\Lip(f)+r\Big]\sfd_F(y,z)
\overset{\eqref{eq:condition_on_r}}\leq
\Big[\Lip\big(f;B^M_\delta(x)\big)+\lambda\Big]\sfd_F(y,z).
\end{split}\]
This shows that
$\lipa(g)(x)\leq\Lip\big(g;B^M_{s_x}(x)\big)\leq\Lip\big(f;B^M_\delta(x)\big)+\lambda$
for every $x\in M$, thus proving the second line in \eqref{eq:claim_C1_approx_Lip}.
Hence the statement is achieved.
\end{proof}
\begin{remark}{\rm
On general Finsler manifolds the exponential map is only of class $C^1$.
Moreover -- as proven by Akbar-Zadeh in \cite{AZ88} -- the exponential map is of
class $C^2$ if and only if it is smooth. The family of those Finsler manifolds
having this property (that are said to be \emph{of Berwald type}) contains
all Riemannian manifolds. We observe that if $(M,F)$ is of Berwald type, then
the approximating function $g$ in Theorem \ref{thm:C1_approx_Lip} can be
chosen to be smooth (by the same proof).
\fr}\end{remark}
\section{Main result}\label{s:main}
Let us consider a geodesically complete, reversible Finsler manifold $(M,F)$
and a non-negative Radon measure $\mu$ on the metric space $(M,\sfd_F)$,
which will remain fixed for the whole section.
\begin{remark}{\rm
Observe that
\begin{equation}\label{eq:M_mms}
(M,\sfd_F,\mu)\text{ is a metric measure space, in the sense of }\eqref{eq:def_mms}.
\end{equation}
Indeed, the metric space $(M,\sfd_F)$ is complete (by Theorem \ref{thm:Hopf-Rinow})
and separable (as the manifold $M$ is second-countable by definition).
\fr}\end{remark}
\subsection{Density in energy of \texorpdfstring{$C^1$}{C1} functions}
Let $f\in C^1_c(M)$ be given. Then we denote by $\ud f$ its differential,
which is a continuous section of the cotangent bundle $T^*M$.
For brevity, let us set
\begin{equation}\label{eq:def_pointwise_norm_differential}
|\ud f|(x):=F^*\big(x,\ud f(x)\big)\quad\text{ for every }x\in M,
\end{equation}
where $F^*(x,\cdot)$ stands for the dual norm of $F(x,\cdot)$.
Observe that the function $f$ can be viewed as an element of the Sobolev
space $W^{1,2}(M,\sfd_F,\mu)$ and that
\begin{equation}\label{eq:|df|_leq_|udf|}
|\d f|\leq|\ud f|\quad\text{ in the }\mu\text{-a.e.\ sense,}
\end{equation}
as a consequence of \eqref{eq:|Df|_leq_lip(f)} and the fact that $\lip(f)=|\ud f|$.
\begin{proposition}[Density in energy of $C^1$ functions]\label{prop:density_energy_C1}
Let $f\in W^{1,2}(M,\sfd_F,\mu)$ be given. Then there exists a sequence
$(f_k)_{k\in\N}\subseteq C^1_c(M)$ such that $f_k\to f$ and $|\ud f_k|\to|\d f|$
in $L^2(\mu)$ as $k\to\infty$.
\end{proposition}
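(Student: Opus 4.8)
The plan is to combine the abstract density-in-energy result for Lipschitz functions (Theorem~\ref{thm:density_energy_Lip}) with the smooth approximation result just proved (Theorem~\ref{thm:C1_approx_Lip}), using a diagonal argument. Since $(M,\sfd_F)$ is proper by Theorem~\ref{thm:Hopf-Rinow}, Theorem~\ref{thm:density_energy_Lip} provides, for the given $f\in W^{1,2}(M,\sfd_F,\mu)$, a sequence $(h_n)_{n\in\N}\subseteq\LIP_c(M)$ with $h_n\to f$ and $\lipa(h_n)\to|\d f|$ in $L^2(\mu)$. So it suffices to approximate each fixed $h\in\LIP_c(M)$, in the $L^2(\mu)$-energy sense, by functions in $C^1_c(M)$; a standard diagonal extraction then yields the claim.

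First I would fix $h\in\LIP_c(M)$ and, for each $j\in\N$, apply Theorem~\ref{thm:C1_approx_Lip} with $\eps=1/j$, $\lambda=1/j$, and some $\delta=\delta_j\downarrow 0$, obtaining $g_j\in C^1(M)$ with $\spt(g_j)\subseteq B^M_{\delta_j}(\spt(h))$ — in particular $g_j\in C^1_c(M)$ — such that $|g_j-h|\leq 1/j$ uniformly and $\lipa(g_j)(x)\leq\Lip(h;B^M_{\delta_j}(x))+1/j$ for every $x$. From the uniform bound $|g_j-h|\leq 1/j$ and the fact that all $g_j$ have support inside the fixed bounded set $B^M_1(\spt(h))$ (for $j$ large) with $\mu$ finite on bounded sets, one gets $g_j\to h$ in $L^2(\mu)$. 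For the energies, recall from \eqref{eq:|df|_leq_|udf|} that $|\d g_j|\leq|\ud g_j|=\lip(g_j)\leq\lipa(g_j)$ $\mu$-a.e.; on the other hand $|\d h|\leq\lip(h)\leq\lipa(h)\leq\Lip(h)$, and by lower semicontinuity of minimal weak upper gradients (Proposition~\ref{prop:lsc_mwug}) any weak-$L^2(\mu)$ limit $G$ of a subsequence of $|\ud g_j|$ satisfies $|\d h|\leq G$.

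The point to control is the reverse inequality for the energies. Here I would use that $\lipa(h)(x)=\inf_{r>0}\Lip(h;B^M_r(x))$, so for every $x$ one has $\limsup_j\big(\Lip(h;B^M_{\delta_j}(x))+1/j\big)=\lipa(h)(x)$ as $\delta_j\downarrow 0$; hence $\limsup_j\lipa(g_j)(x)\leq\lipa(h)(x)$ pointwise, and together with the uniform bound $\lipa(g_j)\leq\Lip(h)+1$ supported in a fixed bounded set, reverse Fatou (dominated convergence) gives $\limsup_j\int|\ud g_j|^2\,\d\mu\leq\limsup_j\int\lipa(g_j)^2\,\d\mu\leq\int\lipa(h)^2\,\d\mu$. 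Combining this with the lower-semicontinuity bound $|\d h|\leq G\leq\liminf|\ud g_j|$ (weakly) forces, along a suitable subsequence, $|\ud g_j|\to|\d h|$ in $L^2(\mu)$ whenever $\lipa(h)=|\d h|$ $\mu$-a.e. The remaining — and genuinely delicate — step is therefore to arrange that the chosen Lipschitz approximants $h_n$ from Theorem~\ref{thm:density_energy_Lip} can be taken with $\lipa(h_n)$ close to $|\d h_n|$ in $L^2(\mu)$, or equivalently to run the whole scheme so that at the end one compares against $|\d f|$ rather than against $\lipa$ of the intermediate functions. The cleanest route is: choose $h_n$ so that $\lipa(h_n)\to|\d f|$; for each $n$ pick $g_{n,j}\in C^1_c(M)$ as above with $g_{n,j}\to h_n$ in $L^2(\mu)$ and $\limsup_j\|\,|\ud g_{n,j}|\,\|_{L^2(\mu)}\leq\|\lipa(h_n)\|_{L^2(\mu)}$; then diagonalize in $n$ and $j$ to get $f_k\to f$ in $L^2(\mu)$ with $\limsup_k\|\,|\ud f_k|\,\|_{L^2(\mu)}\leq\|\,|\d f|\,\|_{L^2(\mu)}$, and finally invoke Proposition~\ref{prop:lsc_mwug} (applied to $f_k$ viewed in $W^{1,2}$, using \eqref{eq:|df|_leq_|udf|}) together with this norm bound to upgrade weak convergence of $|\ud f_k|$ to strong convergence to $|\d f|$ in $L^2(\mu)$, in the by-now classical way (weak convergence plus convergence of norms in a Hilbert space implies strong convergence).
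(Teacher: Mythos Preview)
Your proposal is correct and takes essentially the same route as the paper: approximate $f$ by $h_n\in\LIP_c(M)$ via Theorem~\ref{thm:density_energy_Lip}, then each $h_n$ by $C^1_c$ functions via Theorem~\ref{thm:C1_approx_Lip} (with the pointwise bound $\limsup_j\lipa(g_{n,j})\le\lipa(h_n)$ and reverse Fatou), diagonalize to obtain $f_k\to f$ in $L^2(\mu)$ with $\limsup_k\||\ud f_k|\|_{L^2(\mu)}\le\||\d f|\|_{L^2(\mu)}$, and finish by combining Proposition~\ref{prop:lsc_mwug} with weak-plus-norm convergence. Your momentary detour about needing $\lipa(h)=|\d h|$ is, as you yourself observe, unnecessary once one compares directly against $|\d f|$ after diagonalizing---this is precisely what the paper does.
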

\begin{proof}
First of all, we know from Theorem \ref{thm:density_energy_Lip} that
there exists a sequence $(g_k)_{k\in\N}\subseteq\LIP_c(M)$ such that
$g_k\to f$ and $\lipa(g_k)\to|\d f|$ in $L^2(\mu)$. (Recall that
$(M,\sfd_F)$ is proper by Theorem \ref{thm:Hopf-Rinow}.) 
Now fix $k\in\N$ and observe that Theorem \ref{thm:C1_approx_Lip}
provides us with a sequence $(g_k^i)_{i\in\N}\subseteq C^1_c(M)$ with
\begin{equation}\label{eq:properties_g_k^i}\begin{split}
\big|g_k^i(x)-g_k(x)\big|&\leq\frac{1}{i},\\
\spt(g_k^i)&\subseteq B^M_{1/i}\big(\spt(g_k)\big),\\
\lipa(g_k^i)(x)&\leq\Lip\big(g_k;B^M_{1/i}(x)\big)+\frac{1}{i}
\end{split}
\quad\text{ for every }i\in\N\text{ and }x\in M.
\end{equation}
Notice that the first two lines in \eqref{eq:properties_g_k^i} yield
$\lim_i{\|g_k^i-g_k\|}_{L^2(\mu)}\leq\lim_i\mu\big(B^M_1(\spt(g_k))\big)^{1/2}/i=0$,
while the third one grants that
$\lims_i|\ud g_k^i|(x)=\lims_i\lipa(g_k^i)(x)\leq\lipa(g_k)(x)$
for every $x\in M$. Since we also have that
$|\ud g_k^i|\leq\nchi_{B^M_1(\spt(g_k))}\big(\Lip(g_k)+1\big)\in L^2(\mu)$
for all $i\in\N$, it follows from the reverse Fatou lemma that
$\lims_i{\big\||\ud g_k^i|\big\|}_{L^2(\mu)}\leq{\big\|\lipa(g_k)\big\|}_{L^2(\mu)}$.
Therefore a diagonal argument gives us a sequence $(i_k)_{k\in\N}\subseteq\N$
such that the functions $f_k:=g_k^{i_k}$ satisfy $f_k\to f$ in $L^2(\mu)$ and
\begin{equation}\label{eq:lims|udf_k|_leq_|df|}
\lims_{k\to\infty}{\big\||\ud f_k|\big\|}_{L^2(\mu)}\leq{\big\||\d f|\big\|}_{L^2(\mu)}.
\end{equation}
In particular, both the sequences $\big(|\ud f_k|\big)_k$ and $\big(|\d f_k|\big)_k$
are bounded in $L^2(\mu)$ by \eqref{eq:lims|udf_k|_leq_|df|} and \eqref{eq:|df|_leq_|udf|},
thus (up to subsequence) it holds that $|\d f_k|\weakto h$ and $|\ud f_k|\weakto h'$
weakly for some $h,h'\in L^2(\mu)$. Then Proposition \ref{prop:lsc_mwug}
grants that $|\d f|\leq h\leq h'$ holds $\mu$-a.e.\ in $M$. Given that in
\[
{\big\||\d f|\big\|}_{L^2(\mu)}\leq{\|h'\|}_{L^2(\mu)}
\leq\limi_{k\to\infty}{\big\||\ud f_k|\big\|}_{L^2(\mu)}
\leq\lims_{k\to\infty}{\big\||\ud f_k|\big\|}_{L^2(\mu)}
\overset{\eqref{eq:lims|udf_k|_leq_|df|}}\leq{\big\||\d f|\big\|}_{L^2(\mu)}
\]
all inequalities are actually equalities, it holds that
${\|h'\|}_{L^2(\mu)}={\big\||\d f|\big\|}_{L^2(\mu)}
=\lim_k{\big\||\ud f_k|\big\|}_{L^2(\mu)}$. Hence we conclude that
$h'=|\d f|$ in the $\mu$-a.e.\ sense and accordingly $|\ud f_k|\to|\d f|$
in $L^2(\mu)$.
\end{proof}
\subsection{Concrete tangent and cotangent modules}
We define the ``concrete'' tangent/cotangent modules
associated to $(M,\sfd_F,\mu)$ as
\begin{equation}\label{eq:def_concrete_modules}\begin{split}
\Gamma_2(TM;\mu)&:=\text{space of all }L^2(\mu)\text{-sections of }TM,\\
\Gamma_2(T^*M;\mu)&:=\text{space of all }L^2(\mu)\text{-sections of }T^*M.
\end{split}\end{equation}
The space $\Gamma_2(TM;\mu)$ has a natural structure of $L^2(\mu)$-normed
$L^\infty(\mu)$-module if endowed with the usual vector space structure and
the following pointwise operations:
\begin{equation}\label{eq:concrete_modules_operations}\begin{split}
(fv)(x)&:=f(x)v(x)\in T_x M\\
|v|(x)&:=F\big(x,v(x)\big)
\end{split}
\quad\text{ for }\mu\text{-a.e.\ }x\in M,
\end{equation}
for any $v\in\Gamma_2(TM;\mu)$ and $f\in L^\infty(\mu)$. Similarly,
$\Gamma_2(T^*M;\mu)$ is an $L^2(\mu)$-normed $L^\infty(\mu)$-module.
\medskip

Standard verifications show that the modules $\Gamma_2(TM;\mu)$ and
$\Gamma_2(T^*M;\mu)$ have local dimension equal to $n:=\dim(M)$,
whence they are separable by \cite[Remark 5]{LP18}.
Furthermore, it holds that
\begin{equation}\label{eq:concrete_modules_dual}
\Gamma_2(T^*M;\mu)\text{ and }\Gamma_2(TM;\mu)
\text{ are one the module dual of the other,}
\end{equation}
in particular they are both reflexive as Banach spaces by \cite[Corollary 1.2.18]{G18_NonSmooth}.
It can also be readily proved that
\begin{equation}\label{eq:udf_generate}
\big\{\ud f\,:\,f\in C^1_c(M)\big\}\text{ generates }\Gamma_2(T^*M;\mu)
\text{ in the sense of modules,}
\end{equation}
where each element $\ud f$ can be viewed as an element of $\Gamma_2(T^*M;\mu)$
as it is a continuous section of the cotangent bundle $T^*M$ and its associated
pointwise norm $|\ud f|$ has compact support.
\begin{remark}{\rm
We emphasise that in \eqref{eq:udf_generate} it is fundamental to consider
$C^1$-functions (as opposed to Lipschitz functions).
The reason is that $C^1$-functions are everywhere differentiable,
thus in particular $\mu$-almost everywhere differentiable
(independently of the chosen measure $\mu$), while a Lipschitz function
might be not differentiable at any point of a set of positive $\mu$-measure.
\fr}\end{remark}
\begin{lemma}\label{lem:Gamma_for_Riemannian}
If $(M,F)$ is a Riemannian manifold, then $\Gamma_2(TM;\mu)$ is a Hilbert module.
Conversely, if $\Gamma_2(TM;\mu)$ is a Hilbert module and ${\rm spt}(\mu)=M$,
then $(M,F)$ is a Riemannian manifold.
\end{lemma}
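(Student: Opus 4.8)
\textbf{Proof plan for Lemma \ref{lem:Gamma_for_Riemannian}.}
The plan is to reduce both implications to pointwise statements about norms on the fibres $T_x M$, using the fact (mentioned in the paper) that $\Gamma_2(TM;\mu)$ and $\Gamma_2(T^*M;\mu)$ have local dimension $n=\dim(M)$ and that a module of this type is a Hilbert module if and only if the pointwise parallelogram identity holds $\mu$-a.e. Concretely, for the first implication suppose $(M,F)$ comes from a Riemannian metric $g$, so that $F(x,v)=g_x(v,v)^{1/2}$; then for any $v,w\in\Gamma_2(TM;\mu)$ the pointwise norm $|v|(x)^2=g_x(v(x),v(x))$ satisfies the parallelogram law $|v+w|^2+|v-w|^2=2|v|^2+2|w|^2$ pointwise $\mu$-a.e., and integrating gives the parallelogram identity for the $\Gamma_2(TM;\mu)$-norm; hence $\Gamma_2(TM;\mu)$ is a Hilbert module. (Equivalently one can invoke the criterion of \cite{G18_NonSmooth} that a module whose pointwise norm satisfies the parallelogram identity is Hilbert.)

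For the converse, assume $\Gamma_2(TM;\mu)$ is a Hilbert module and $\spt(\mu)=M$. I would first translate the Hilbert-module hypothesis into the pointwise parallelogram identity: the parallelogram defect $D(v,w):=|v+w|^2+|v-w|^2-2|v|^2-2|w|^2$ is a continuous function on $M$ (for fixed continuous sections $v,w$, since $F$ is continuous), and the Hilbert-module property forces $\int D(v,w)\,\d\mu=0$; since $D(v,w)$ need not be signed I would instead argue that the Hilbert-module structure gives a bilinear pointwise scalar product compatible with $|\cdot|$ (this is the standard polarisation statement for $L^2$-normed modules), so that in fact $D(v,w)=0$ $\mu$-a.e. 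Then, using continuity of $D(v,w)$ and $\spt(\mu)=M$, the vanishing $\mu$-a.e.\ upgrades to $D(v,w)\equiv 0$ on all of $M$. Running this over a family of continuous local sections that spans each tangent space $T_x M$ — e.g.\ the coordinate vector fields in a chart around a given point $x$, suitably cut off to lie in $\Gamma_2(TM;\mu)$ — shows that the Minkowski norm $F(x,\cdot)$ on $T_x M$ satisfies the parallelogram identity for every $x\in M$. By the classical Jordan--von Neumann theorem, a norm satisfying the parallelogram identity is induced by an inner product; thus $g_x(v,w):=\frac14\big(F(x,v+w)^2-F(x,v-w)^2\big)$ defines an inner product on each $T_x M$ with $F(x,v)=g_x(v,v)^{1/2}$.

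It then remains to check that $x\mapsto g_x$ is a smooth Riemannian metric, which follows from the smoothness of $F$ on $TM\setminus\{0\}$: in any chart, $g_{ij}(x)=\frac12\,\partial^2_{v^i v^j}\big(F(x,v)^2\big)$ evaluated at a fixed direction, and the polarisation formula together with the $C^\infty$-regularity of $F^2$ away from the zero section gives that the $g_{ij}$ are smooth; positive-definiteness of $(g_{ij})$ is exactly the strong-convexity hypothesis (v) in the definition of a Minkowski norm (and is in any case automatic once we know $g_x$ is a genuine inner product inducing the nondegenerate norm $F(x,\cdot)$). Hence $(M,g)$ is a Riemannian manifold with $\sfd_g=\sfd_F$, i.e.\ $(M,F)$ is Riemannian.

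I expect the main obstacle to be the converse direction, specifically the passage from ``$\Gamma_2(TM;\mu)$ is a Hilbert module'' to the \emph{everywhere}-pointwise parallelogram identity on the fibres. The Hilbert-module hypothesis directly yields only an $\mu$-almost-everywhere statement, so the roles of continuity of $F$ and of the assumption $\spt(\mu)=M$ are essential and must be handled carefully — in particular one must make sure the relevant coordinate vector fields, after cutting off, still belong to $\Gamma_2(TM;\mu)$ and that the a.e.-vanishing of the parallelogram defect is genuinely available for enough such sections to pin down $F(x,\cdot)$ at the fixed point $x$. Once that pointwise identity is secured, the Jordan--von Neumann theorem and the smoothness bootstrapping from $F$ are routine.
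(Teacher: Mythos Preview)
Your plan matches the paper's proof almost exactly. The one point to correct is in the converse direction: testing the parallelogram defect only on a \emph{spanning} set such as the coordinate vector fields $\partial_1,\dots,\partial_n$ is insufficient, since knowing $D(\partial_i,\partial_j)=0$ for the finitely many pairs $(i,j)$ does not force the norm $F(x,\cdot)$ to be Euclidean (the parallelogram identity is not a bilinear condition, so it does not propagate from a basis to the whole space). What is needed---and what the paper does---is a \emph{countable} family $(v_i)_{i\in\N}$ of continuous sections on a chart domain $U$ such that $\big(v_i(x)\big)_{i\in\N}$ is dense in $T_xM$ for every $x\in U$ (for instance, all rational-coefficient combinations of the coordinate fields, suitably cut off). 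The Hilbert-module hypothesis then gives $D(v_i,v_j)=0$ $\mu$-a.e.\ for each pair $(i,j)$; continuity of $F$ together with $\spt(\mu)=M$ upgrades this to all of $U$; and density of $\big(v_i(x)\big)_i$ plus continuity of $F(x,\cdot)$ yields the parallelogram identity on all of $T_xM$. This is precisely the obstacle you flagged in your final paragraph, so you were right to be wary of it. Your additional smoothness verification for $g$ is correct and in fact more explicit than the paper, which leaves that step implicit.
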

\begin{proof} First of all, suppose that $(M,F)$ is a Riemannian manifold,
i.e.\ that each norm $F(x,\cdot)$ satisfies the parallelogram identity.
Then for any $v,w\in\Gamma_2(TM;\mu)$ it holds that
\[\begin{split}
|v+w|^2(x)+|v-w|^2(x)&=F\big(x,(v+w)(x)\big)^2+F\big(x,(v-w)(x)\big)^2\\
&=2\,F\big(x,v(x)\big)^2+2\,F\big(x,w(x)\big)^2\\
&=2\,|v|^2(x)+2\,|w|^2(x)\quad\text{ for }\mu\text{-a.e.\ }x\in M,
\end{split}\]
thus showing that $\Gamma_2(TM;\mu)$ is a Hilbert module.

Now suppose that the concrete tangent module $\Gamma_2(TM;\mu)$ is a Hilbert
module and ${\rm spt}(\mu)=M$. Let $U$ be the domain of some chart on $M$.
Then one can easily build a sequence $(v_i)_{i\in\N}$ of continuous vector fields
on $U$ such that
\begin{equation}\label{eq:v_i(x)_dense}
\big(v_i(x)\big)_{i\in\N}\text{ is dense in }T_x M\quad\text{ for every }x\in U.
\end{equation}
Hence for $\mu$-a.e.\ $x\in U$ we have that the identity
\[\begin{split}
F\big(x,(v_i+v_j)(x)\big)^2+F\big(x,(v_i-v_j)(x)\big)^2
&=|v_i+v_j|^2(x)+|v_i-v_j|^2(x)\\
&=2\,|v_i|^2(x)+2\,|v_j|^2(x)\\
&=2\,F\big(x,v_i(x)\big)^2+2\,F\big(x,v_j(x)\big)^2
\end{split}\]
holds for every $i,j\in\N$. Since the function $F:\,TM\to[0,+\infty)$ is continuous
and any set of full $\mu$-measure is dense in $M$,
we deduce from property \eqref{eq:v_i(x)_dense} that the norm $F(x,\cdot)$ satisfies
the parallelogram identity for every point $x\in U$. By arbitrariness of $U$, we thus
conclude that $(M,F)$ is a Riemannian manifold.
\end{proof}
\subsection{The isometric embedding
\texorpdfstring{$L^2_\mu(TM)\hookrightarrow\Gamma_2(TM;\mu)$}{iota}}
The aim of this conclusive subsection is to investigate the relation
between the abstract (co)tangent module and the concrete one.
The argument goes as follows: the natural projection map 
$\P:\,\Gamma_2(T^*M;\mu)\to L^2_\mu(T^*M)$ (Lemma \ref{lem:def_P})
is a quotient map (Proposition \ref{prop:P_quotient}), whence its
adjoint operator $\iota:\,L^2_\mu(TM)\to\Gamma_2(TM;\mu)$ is an
isometric embedding (Theorem \ref{thm:iota_isometric}). As a consequence,
the Sobolev space $W^{1,2}(M,\sfd_F,\mu)$ is a Hilbert space as soon
as $(M,F)$ is a Riemannian manifold (Theorem \ref{thm:UiH_Riemannian}).
Such results are essentially taken from the paper \cite{GP16_Behaviour}, where the
Euclidean case has been treated; anyway, we provide here their full
proof for completeness.
\medskip

In light of inequality \eqref{eq:|df|_leq_|udf|}, there is a natural
projection operator $\P$ from the concrete cotangent module
$\Gamma_2(T^*M;\mu)$ to the abstract cotangent module $L^2_\mu(T^*M)$.
The characterisation of such operator is the subject of the following result.
\begin{lemma}[The projection $\P$]\label{lem:def_P}
There exists a unique $L^\infty(\mu)$-linear and continuous operator
\begin{equation}\label{eq:map_P}
\P:\,\Gamma_2(T^*M;\mu)\longrightarrow L^2_\mu(T^*M)
\end{equation}
such that $\P(\ud f)=\d f$ for every $f\in C^1_c(M)$. Moreover, it holds that
\begin{equation}\label{eq:ineq_P}
\big|\P(\underline\omega)\big|\leq|\underline\omega|\;\;\;\mu\text{-a.e.}
\quad\text{ for every }\underline\omega\in\Gamma_2(T^*M;\mu).
\end{equation}
\end{lemma}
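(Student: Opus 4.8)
The plan is to build $\P$ by first defining it on the generating set $\{\ud f:f\in C^1_c(M)\}$ via $\P(\ud f):=\d f$, then extending $L^\infty(\mu)$-linearly and by density. The crux is a well-posedness estimate: for any finite combination $\underline\omega=\sum_{j=1}^N\chi_{E_j}\,\ud f_j$ with $f_j\in C^1_c(M)$ and $(E_j)$ a Borel partition of $M$, one must show
\[
\Big|\sum_{j=1}^N\nchi_{E_j}\,\d f_j\Big|\leq\Big|\sum_{j=1}^N\nchi_{E_j}\,\ud f_j\Big|=\sum_{j=1}^N\nchi_{E_j}\,|\ud f_j|\qquad\mu\text{-a.e.}
\]
Granting this, the assignment $\sum_j\nchi_{E_j}\,\ud f_j\mapsto\sum_j\nchi_{E_j}\,\d f_j$ is well-defined (if two such simple combinations agree as elements of $\Gamma_2(T^*M;\mu)$, apply the estimate to their difference), $1$-Lipschitz, and $L^\infty(\mu)$-linear on the dense submodule of simple combinations; it therefore extends uniquely to a continuous $L^\infty(\mu)$-linear operator $\P:\Gamma_2(T^*M;\mu)\to L^2_\mu(T^*M)$ satisfying both $\P(\ud f)=\d f$ and \eqref{eq:ineq_P}. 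Uniqueness is immediate from \eqref{eq:udf_generate}.

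The main obstacle is thus proving the pointwise inequality for simple combinations, which does not follow by merely summing \eqref{eq:|df|_leq_|udf|} term by term (that only controls each $|\d f_j|$ on all of $M$, not the glued object on the pieces $E_j$). The natural route is the \emph{locality} of minimal weak upper gradients / of the abstract differential: restricted to the open set where two Sobolev functions coincide, their differentials coincide $\mu$-a.e. More usefully, one can argue via a Lipschitz gluing. Fix $\eta>0$; for $\mu$-a.e.\ $x$ pick the index $j$ with $x\in E_j$ and a small ball $B$ around $x$ on which $\Lip(f_j;B)\le|\ud f_j|(x)+\eta$. Using the density-in-energy statement for the gluing function $h:=\sum_j\nchi_{E_j}f_j$ is not directly available since $h$ need not be Sobolev; instead the cleanest path is: by the very construction of the cotangent module (condition (i), $|\d f|=|Df|$, plus locality) one has for any $g\in L^\infty(\mu)$ and $f\in W^{1,2}$ that $|g\,\d f|=|g|\,|Df|$, and by locality $\nchi_{E_j}\,\d f_j=\nchi_{E_j}\,\d f$ whenever $f\in W^{1,2}(M,\sfd_F,\mu)$ agrees with $f_j$ on a neighbourhood of $E_j$. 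So it suffices to produce, for each $\eps>0$, a single $f\in\LIP_c(M)$ (hence in $W^{1,2}$) with $\lipa(f)(x)\le\sum_j\nchi_{E_j}(x)\,|\ud f_j|(x)+\eps$ $\mu$-a.e.\ and $\nchi_{E_j}\,\d f=\nchi_{E_j}\,\d f_j$ for all $j$.

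Concretely: one may take $f$ to be a suitable mollification-free patching is not possible, so instead invoke the following softer fact. The inequality to be proven is equivalent, by $\mu$-a.e.\ locality, to: for every Sobolev $f$ and every $\eps$, $|\d f|\le\inf\{\sum_j\nchi_{E_j}|\ud f_j|:\ f=f_j\text{ near }E_j\}$. This is exactly the statement that $\d$ is the ``most economical'' object with $|\d f|=|Df|$ and that $|Df|\le\lipa(f_j)$ on $E_j$ by \eqref{eq:|Df|_leq_lip(f)} together with the locality of $|Df|$ on the open sets $\{f=f_j\}^\circ$. I expect the authors to cite the locality property of minimal weak upper gradients (namely $|Df|=|Dg|$ $\mm$-a.e.\ on $\{f=g\}$ for $f,g\in W^{1,2}$) and the corresponding module-theoretic locality of $\d$, reducing \eqref{eq:ineq_P} to a finite sum of the elementary bounds $|Df_j|\le|\ud f_j|$ restricted to $E_j$. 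Assembling these, the simple-function estimate follows, $\P$ is constructed as above, and the proof is complete.
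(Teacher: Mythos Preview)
Your overall plan (define on simple combinations, prove a pointwise estimate, extend by density and $L^\infty$-linearity) matches the paper's exactly. However, you have misdiagnosed the ``main obstacle'': the pointwise inequality for simple combinations \emph{does} follow immediately from summing \eqref{eq:|df|_leq_|udf|} term by term, and the paper proves it in one line.

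The point you overlooked is purely module-theoretic and has nothing to do with locality of weak upper gradients. In \emph{any} $L^2(\mu)$-normed $L^\infty(\mu)$-module $\mathscr M$, if $(E_j)_{j=1}^N$ is a Borel partition of $M$ and $\omega_1,\ldots,\omega_N\in\mathscr M$, then
\[
\Big|\sum_{j=1}^N\nchi_{E_j}\,\omega_j\Big|=\sum_{j=1}^N\nchi_{E_j}\,|\omega_j|\qquad\mu\text{-a.e.}
\]
(multiply both sides by $\nchi_{E_i}$ and use $|\nchi_{E_i}\eta|=\nchi_{E_i}|\eta|$). You used this identity on the right-hand side in $\Gamma_2(T^*M;\mu)$, but it holds equally well on the left-hand side in $L^2_\mu(T^*M)$. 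The paper's estimate is therefore simply
\[
\Big|\sum_{j}\nchi_{E_j}\,\d f_j\Big|
=\sum_{j}\nchi_{E_j}\,|\d f_j|
\overset{\eqref{eq:|df|_leq_|udf|}}{\leq}\sum_{j}\nchi_{E_j}\,|\ud f_j|
=\Big|\sum_{j}\nchi_{E_j}\,\ud f_j\Big|,
\]
which is exactly \eqref{eq:P_well-posed}. No gluing of the $f_j$ into a single Sobolev function, no locality of $|Df|$ on coincidence sets, and no Lipschitz patching is needed; the element $\sum_j\nchi_{E_j}\,\d f_j$ already lives in the module and its pointwise norm is computed algebraically. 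Your detour through locality would eventually circle back to the same conclusion, but it is both unnecessary and, as written, not a complete argument.
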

\begin{proof}
We denote by $\mathcal V$ the vector space of all elements of
$\Gamma_2(T^*M;\mu)$ that can be written in the form
$\sum_{i=1}^k\nchi_{E_i}\,\ud f_i$, where
$(E_i)_{i=1}^k$ is a Borel partition of $M$ and $(f_i)_{i=1}^k\subseteq C^1_c(M)$.
Recall that $\mathcal V$ is dense in $\Gamma_2(T^*M;\mu)$ by \eqref{eq:udf_generate}.
Since $\P$ is required to be $L^\infty(\mu)$-linear and to satisfy
$\P(\ud f)=\d f$ for all $f\in C^1_c(M)$, we are forced to set
\begin{equation}\label{eq:def_P}
\P(\underline\omega):=\sum_{i=1}^k\nchi_{E_i}\,\d f_i
\quad\text{ for every }\underline\omega=\sum_{i=1}^k\nchi_{E_i}\,\ud f_i\in\mathcal V.
\end{equation}
The well-posedness of such definition stems from the
validity of the $\mu$-a.e.\ inequality
\begin{equation}\label{eq:P_well-posed}
\bigg|\sum_{i=1}^k\nchi_{E_i}\,\d f_i\bigg|
=\sum_{i=1}^k\nchi_{E_i}\,|\d f_i|\overset{\eqref{eq:|df|_leq_|udf|}}\leq
\sum_{i=1}^k\nchi_{E_i}\,|\ud f_i|=
\bigg|\sum_{i=1}^k\nchi_{E_i}\,\ud f_i\bigg|,
\end{equation}
which also ensures that the map $\P:\,\mathcal V\to L^2_\mu(T^*M)$ is linear
continuous and accordingly can be uniquely extended to a linear continuous
operator $\P:\,\Gamma_2(T^*M;\mu)\to L^2_\mu(T^*M)$. Another consequence of
\eqref{eq:P_well-posed} is that $\P$ satisfies the inequality \eqref{eq:ineq_P}.
Finally, by suitably approximating any element of the space $L^\infty(\mu)$ with
a sequence of simple functions, we deduce from \eqref{eq:def_P} that the map $\P$
is $L^\infty(\mu)$-linear. This completes the proof of the statement.
\end{proof}

Given any $\omega\in L^2_\mu(T^*M)$, we infer from \eqref{eq:ineq_P} that
$|\omega|\leq|\underline\omega|$ holds $\mu$-a.e.\ for any
$\underline\omega\in\Gamma_2(T^*M;\mu)$ such that $\P(\underline\omega)=\omega$,
so that the estimate
\begin{equation}\label{eq:ineq_P_bis}
|\omega|\leq\underset{\underline\omega\in\P^{-1}(\omega)}{\rm ess\,inf\,}
|\underline\omega|\quad\text{ holds }\mu\text{-a.e.\ in }M.
\end{equation}
The next result shows that the inequality in \eqref{eq:ineq_P_bis} is actually
an equality, thus proving that the operator $\P$ is a \emph{quotient map}.
The proof relies upon Proposition \ref{prop:density_energy_C1} above.
\begin{proposition}[$\P$ is a quotient map]\label{prop:P_quotient}
The operator $\P$ satisfies the following property:
\begin{equation}\label{eq:P_quotient_claim}
\text{For any }\omega\in L^2_\mu(T^*M)\text{ there exists }
\underline\omega\in\P^{-1}(\omega)\text{ such that }
|\omega|=|\underline\omega|\text{ in the }\mu\text{-a.e.\ sense.}
\end{equation}
In particular, it holds that the map $\P$ is surjective.
\end{proposition}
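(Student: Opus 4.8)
The plan is to establish \eqref{eq:P_quotient_claim} in three stages of increasing generality: first for $\omega=\d f$ with $f\in W^{1,2}(M,\sfd_F,\mu)$, then for $L^\infty(\mu)$-linear combinations of such elements, and finally for an arbitrary $\omega\in L^2_\mu(T^*M)$, using that the former generate the cotangent module. Throughout, the decisive structural fact is that $\Gamma_2(T^*M;\mu)$ is reflexive by \eqref{eq:concrete_modules_dual} and \cite[Corollary 1.2.18]{G18_NonSmooth} -- so bounded sequences there admit weakly convergent subsequences -- while $L^2_\mu(T^*M)$ is \emph{not} a priori known to be reflexive; the bounded linear operator $\P$ is, of course, automatically weak-to-weak continuous, which is what lets us transport weak limits across it.

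For the first stage I would apply Proposition \ref{prop:density_energy_C1} to get $(f_k)_{k\in\N}\subseteq C^1_c(M)$ with $f_k\to f$ in $L^2(\mu)$ and $|\ud f_k|\to|\d f|$ in $L^2(\mu)$; in particular $(\ud f_k)_k$ is bounded in $\Gamma_2(T^*M;\mu)$, so up to a subsequence $\ud f_k\weakto\underline\omega$ weakly therein for some $\underline\omega\in\Gamma_2(T^*M;\mu)$. Weak continuity of $\P$ gives $\d f_k=\P(\ud f_k)\weakto\P(\underline\omega)$ weakly in $L^2_\mu(T^*M)$, whence Proposition \ref{prop:closure_diff}, applied to $f_k\to f$ weakly in $L^2(\mu)$ together with $\d f_k\weakto\P(\underline\omega)$, yields $\d f=\P(\underline\omega)$, i.e.\ $\underline\omega\in\P^{-1}(\omega)$. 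It remains to check that no norm is lost: \eqref{eq:ineq_P} gives $|\omega|\leq|\underline\omega|$ $\mu$-a.e., while weak lower semicontinuity of the norm, combined with $\|\underline\omega\|_{\Gamma_2(T^*M;\mu)}=\big\||\underline\omega|\big\|_{L^2(\mu)}$, gives $\big\||\underline\omega|\big\|_{L^2(\mu)}\leq\liminf_k\big\||\ud f_k|\big\|_{L^2(\mu)}=\big\||\d f|\big\|_{L^2(\mu)}=\big\||\omega|\big\|_{L^2(\mu)}$. Comparing these, $\int|\underline\omega|^2\,\d\mu=\int|\omega|^2\,\d\mu$, and together with the pointwise inequality this forces $|\underline\omega|=|\omega|$ $\mu$-a.e.

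The second stage is routine gluing: if $\omega=\sum_{i=1}^k\nchi_{E_i}\,\d f_i$ with $(E_i)_{i=1}^k$ a Borel partition of $M$ and $f_i\in W^{1,2}(M,\sfd_F,\mu)$, I would choose $\underline\omega_i\in\P^{-1}(\d f_i)$ with $|\underline\omega_i|=|\d f_i|$ from the first stage and set $\underline\omega:=\sum_{i=1}^k\nchi_{E_i}\,\underline\omega_i$; the $L^\infty(\mu)$-linearity of $\P$ gives $\P(\underline\omega)=\omega$, and since the $E_i$'s partition $M$ one has $|\underline\omega|=\sum_i\nchi_{E_i}|\underline\omega_i|=\sum_i\nchi_{E_i}|\d f_i|=|\omega|$ $\mu$-a.e. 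For the third stage, since $\{\d f:f\in W^{1,2}(M,\sfd_F,\mu)\}$ generates $L^2_\mu(T^*M)$ in the sense of modules, combinations as in the second stage are dense; pick such $\omega_n\to\omega$ strongly in $L^2_\mu(T^*M)$ and corresponding $\underline\omega_n\in\P^{-1}(\omega_n)$ with $|\underline\omega_n|=|\omega_n|$. Then $\big\||\omega_n|\big\|_{L^2(\mu)}\to\big\||\omega|\big\|_{L^2(\mu)}$, so $(\underline\omega_n)_n$ is bounded in $\Gamma_2(T^*M;\mu)$ and, up to a subsequence, $\underline\omega_n\weakto\underline\omega$ weakly there; weak continuity of $\P$ and uniqueness of weak limits give $\P(\underline\omega)=\omega$, while \eqref{eq:ineq_P} and weak lower semicontinuity of the norm give $|\omega|\leq|\underline\omega|$ $\mu$-a.e.\ and $\big\||\underline\omega|\big\|_{L^2(\mu)}\leq\big\||\omega|\big\|_{L^2(\mu)}$, hence $|\underline\omega|=|\omega|$ $\mu$-a.e.\ exactly as before. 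Surjectivity of $\P$ is then immediate. I expect the only genuine difficulty to lie in the first stage -- producing a preimage of $\d f$ of \emph{minimal} norm: the naive attempt of keeping the sections $\ud f_k$ fails because $\d f_k$ need not converge strongly and $L^2_\mu(T^*M)$ is not yet known to be reflexive, and one escapes this only by extracting the weak limit inside the reflexive concrete module $\Gamma_2(T^*M;\mu)$ and combining it with the closure of the differential and the energy-density of $C^1$ functions.
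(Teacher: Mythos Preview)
Your proposal is correct and follows essentially the same three-step argument as the paper's own proof: first lift $\d f$ via Proposition~\ref{prop:density_energy_C1}, reflexivity of $\Gamma_2(T^*M;\mu)$, and the closure of $\d$; then glue on Borel partitions by $L^\infty(\mu)$-linearity; finally pass to general $\omega$ by density and another weak-compactness argument in $\Gamma_2(T^*M;\mu)$. The commentary you add about why reflexivity of the concrete module is the key structural input is accurate and matches the paper's reasoning.
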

\begin{proof}
We divide the proof into three steps:
\smallskip

\noindent{\color{blue}\textsc{Step 1: \eqref{eq:P_quotient_claim} for $\omega=\d f$.}}
Let $f\in W^{1,2}(M,\sfd_F,\mu)$ be fixed. By Proposition \ref{prop:density_energy_C1},
we can pick a sequence $(f_k)_{k\in\N}\subseteq C^1_c(M)$ such that $f_k\to f$
and $|\ud f_k|\to|\d f|$ in $L^2(\mu)$. In particular, it holds that $(\ud f_k)_{k\in\N}$
is bounded in $\Gamma_2(T^*M;\mu)$. Since $\Gamma_2(T^*M;\mu)$ is reflexive,
we have (up to subsequence) that $\ud f_k\weakto\underline\omega$ weakly in
$\Gamma_2(T^*M;\mu)$ for some $\underline\omega\in\Gamma_2(T^*M;\mu)$.
The map $\P$ being linear and continuous, it holds
$\d f_k=\P(\ud f_k)\weakto\P(\underline\omega)$ weakly in $L^2_\mu(T^*M)$.
Then Proposition \ref{prop:closure_diff} grants that $\P(\underline\omega)=\d f$.
Moreover, the $\mu$-a.e.\ inequality
$|\d f|=\big|\P(\underline\omega)\big|\leq|\underline\omega|$ follows from
\eqref{eq:ineq_P}. Hence from
${\big\||\underline\omega|\big\|}_{L^2(\mu)}\leq
\limi_k{\big\||\ud f_k|\big\|}_{L^2(\mu)}
={\big\||\d f|\big\|}_{L^2(\mu)}$
we deduce that $|\d f|=|\underline\omega|$ is satisfied in the $\mu$-a.e.\ sense.
This proves the claim \eqref{eq:P_quotient_claim} for all $\omega=\d f$ with
$f\in W^{1,2}(M,\sfd_F,\mu)$.
\smallskip

\noindent{\color{blue}\textsc{Step 2: \eqref{eq:P_quotient_claim} for $\omega$ simple.}}
Let $\omega\in L^2_\mu(T^*M)$ be of the form
$\omega=\sum_{i=1}^k\nchi_{E_i}\,\d f_i$, where $(E_i)_{i=1}^k$ is a Borel
partition of $M$ and $(f_i)_{i=1}^k\subseteq W^{1,2}(M,\sfd_F,\mu)$.
From \textsc{Step 1} we know that there exist elements
$\underline\omega_1,\ldots,\underline\omega_k\in\Gamma_2(T^*M;\mu)$ such
that $\P(\underline\omega_i)=\d f_i$ and $|\d f_i|=|\underline\omega_i|$
$\mu$-a.e.\ for all $i=1,\ldots,k$. Now call
$\underline\omega:=\sum_{i=1}^k\nchi_{E_i}\,\underline\omega_i\in\Gamma_2(T^*M;\mu)$.
Then the $L^\infty(\mu)$-linearity of $\P$ ensures that $\P(\underline\omega)=\omega$,
which together with the $\mu$-a.e.\ identity
\[
|\omega|=\bigg|\sum_{i=1}^k\nchi_{E_i}\,\d f_i\bigg|=
\sum_{i=1}^k\nchi_{E_i}\,|\d f_i|=\sum_{i=1}^k\nchi_{E_i}\,|\underline\omega_i|=
\bigg|\sum_{i=1}^k\nchi_{E_i}\,\underline\omega_i\bigg|=|\underline\omega|
\]
grant that the claim \eqref{eq:P_quotient_claim} holds whenever
$\omega$ is a simple $1$-form.
\smallskip

\noindent{\color{blue}\textsc{Step 3: \eqref{eq:P_quotient_claim} for general $\omega$.}}
Fix $\omega\in L^2_\mu(T^*M)$. Since simple $1$-forms are dense in $L^2_\mu(T^*M)$,
we can choose a sequence $(\omega_k)_{k\in\N}\subseteq L^2_\mu(T^*M)$ of simple
$1$-forms converging to $\omega$. Given $k\in\N$, there exists an element
$\underline\omega_k\in\Gamma_2(T^*M;\mu)$ such that $\P(\underline\omega_k)=\omega_k$
and $|\omega_k|=|\underline\omega_k|$ $\mu$-a.e.\ by \textsc{Step 2}.
In particular, the sequence $(\underline\omega_k)_{k\in\N}$ is bounded in the
reflexive space $\Gamma_2(T^*M;\mu)$, whence there exists
$\underline\omega\in\Gamma_2(T^*M;\mu)$ such that (up to subsequence)
we have $\underline\omega_k\weakto\underline\omega$. Since $\P$ is linear and
continuous, we deduce that $\omega_k=\P(\underline\omega_k)\weakto\P(\underline\omega)$.
On the other hand, it holds that $\omega_k\to\omega$ by assumption,
thus necessarily $\P(\underline\omega)=\omega$. Finally, we have
$|\omega|=\big|\P(\underline\omega)\big|\leq|\underline\omega|$ $\mu$-a.e.\ by
\eqref{eq:ineq_P} and
\[
{\big\||\underline\omega|\big\|}_{L^2(\mu)}\leq
\limi_{k\to\infty}{\big\||\underline\omega_k|\big\|}_{L^2(\mu)}=
\limi_{k\to\infty}{\big\||\omega_k|\big\|}_{L^2(\mu)}=
{\big\||\omega|\big\|}_{L^2(\mu)},
\]
so that $|\omega|=|\underline\omega|$ in the $\mu$-a.e.\ sense.
This shows the claim \eqref{eq:P_quotient_claim} for any $\omega\in L^2_\mu(T^*M)$.
\end{proof}

Our main result is the following: the adjoint operator $\iota$ of the map $\P$
is an isometric embedding of the abstract tangent module $L^2_\mu(TM)$ into
the concrete tangent module $\Gamma_2(TM;\mu)$. This is achieved by duality in
the ensuing theorem, as a consequence of the fact that $\P$ is a quotient map.
\begin{theorem}[The isometric embedding $\iota$]\label{thm:iota_isometric}
Let $(M,F)$ be a geodesically complete, reversible Finsler manifold and
$\mu$ a non-negative Radon measure on $(M,\sfd_F)$. Let us denote by
\begin{equation}
\iota:\,L^2_\mu(TM)\longrightarrow\Gamma_2(TM;\mu)
\end{equation}
the adjoint map of $\P:\,\Gamma_2(T^*M;\mu)\to L^2_\mu(T^*M)$,
i.e.\ the unique $L^\infty(\mu)$-linear and continuous operator satisfying
\begin{equation}\label{eq:def_iota}
\underline\omega\big(\iota(v)\big)=\P(\underline\omega)(v)\;\;\;\mu\text{-a.e.}
\quad\text{ for every }v\in L^2_\mu(TM)\text{ and }\underline\omega\in\Gamma_2(T^*M;\mu).
\end{equation}
Then it holds that
\begin{equation}\label{eq:iota_isometric}
\big|\iota(v)\big|=|v|\;\;\;\mu\text{-a.e.}\quad\text{ for every }v\in L^2_\mu(TM).
\end{equation}
In particular, the operator $\iota$ is an isometric embedding
and $L^2_\mu(TM)$ is a finitely-generated module.
\end{theorem}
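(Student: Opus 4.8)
The plan is to derive the isometry \eqref{eq:iota_isometric} from the fact that $\P$ is a quotient map (Proposition \ref{prop:P_quotient}) via a standard duality argument. First I would recall that for a general $L^\infty(\mu)$-linear continuous map between $L^2(\mu)$-normed $L^\infty(\mu)$-modules, the adjoint $\iota$ exists and is well-defined by \eqref{eq:def_iota}; its $L^\infty(\mu)$-linearity and continuity are automatic. The pointwise-norm inequality $|\iota(v)|\leq|v|$ $\mu$-a.e.\ follows immediately from \eqref{eq:def_iota} and \eqref{eq:ineq_P}: for any $\underline\omega\in\Gamma_2(T^*M;\mu)$ one has $|\underline\omega(\iota(v))|=|\P(\underline\omega)(v)|\leq|\P(\underline\omega)|\,|v|\leq|\underline\omega|\,|v|$ $\mu$-a.e., and taking the essential supremum over $\underline\omega$ in the unit ball (using that the pointwise norm on a module is recovered as such an essential supremum, by duality of the modules, cf.\ \eqref{eq:concrete_modules_dual}) yields $|\iota(v)|\leq|v|$.

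The reverse inequality is where the quotient-map property enters, and I expect it to be the main point. Fix $v\in L^2_\mu(TM)$ and an arbitrary $\omega\in L^2_\mu(T^*M)$. By Proposition \ref{prop:P_quotient} there exists $\underline\omega\in\P^{-1}(\omega)$ with $|\underline\omega|=|\omega|$ $\mu$-a.e. Then \eqref{eq:def_iota} gives $\underline\omega(\iota(v))=\P(\underline\omega)(v)=\omega(v)$ $\mu$-a.e., so that
\[
|\omega(v)|=|\underline\omega(\iota(v))|\leq|\underline\omega|\,|\iota(v)|=|\omega|\,|\iota(v)|\quad\mu\text{-a.e.}
\]
Taking the essential supremum of $|\omega(v)|/|\omega|$ (suitably localised on the set $\{|\omega|>0\}$, and choosing $\omega$ appropriately on the complement) over all $\omega\in L^2_\mu(T^*M)$, and again invoking that the pointwise norm $|v|$ on the tangent module $L^2_\mu(TM)=(L^2_\mu(T^*M))^*$ is exactly the essential supremum of $|\omega(v)|$ over $\omega$ with $|\omega|\leq 1$, we conclude $|v|\leq|\iota(v)|$ $\mu$-a.e. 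The only mild subtlety here is the standard localisation: one first works on a fixed Borel set where $|\omega|$ is comparable to $1$, exhausts $M$ by such sets, and pieces the estimates together using $\sigma$-finiteness of $\mu$ together with the gluing property of modules. Combining the two inequalities yields \eqref{eq:iota_isometric}.

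Finally, from $|\iota(v)|=|v|$ $\mu$-a.e.\ it follows that $\iota$ preserves the $L^2(\mu)$-norm, hence is a (linear) isometry; being an $L^\infty(\mu)$-linear isometry of modules, it is in particular an isometric embedding of $L^2_\mu(TM)$ into $\Gamma_2(TM;\mu)$ and its image is a submodule. Since $\Gamma_2(TM;\mu)$ has local dimension $n=\dim(M)$ everywhere, it is finitely generated (indeed generated by $n$ elements locally, which can be glued to a global generating set of cardinality $n$ up to a Borel partition), and therefore so is its submodule $\iota(L^2_\mu(TM))\cong L^2_\mu(TM)$. This gives the last assertion of the theorem. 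The whole argument is essentially the one in \cite{GP16_Behaviour}; the only genuinely new input is Proposition \ref{prop:P_quotient}, which in turn rests on the density-in-energy result Proposition \ref{prop:density_energy_C1} and ultimately on the $C^1$-approximation Theorem \ref{thm:C1_approx_Lip}.
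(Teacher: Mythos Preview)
Your proposal is correct and follows essentially the same duality argument as the paper: well-definedness and the inequality $|\iota(v)|\leq|v|$ from \eqref{eq:ineq_P}, the reverse inequality from Proposition \ref{prop:P_quotient} via the characterisation of the pointwise norm as an essential supremum over the dual unit ball, and the finite-generation statement from the fact that $\Gamma_2(TM;\mu)$ has local dimension $n$. The only difference is cosmetic: the paper works directly with $\omega$ satisfying $|\omega|\leq 1$ $\mu$-a.e.\ and uses $|v|=\underset{|\omega|\leq 1}{\mathrm{ess\,sup}}\,\omega(v)$, so the localisation/exhaustion detour you sketch is unnecessary.
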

\begin{proof}
First of all, the $\mu$-a.e.\ inequality
$\big|\P(\underline\omega)(v)\big|\leq|\underline\omega||v|$ -- which is granted
by \eqref{eq:ineq_P} -- shows that the element $\iota(v)$ in \eqref{eq:def_iota}
is well-defined and that the map $\iota$ is $L^\infty(\mu)$-linear continuous.
The same inequality also implies that $\big|\iota(v)\big|\leq|v|$ holds
$\mu$-a.e.\ for any fixed $v\in L^2_\mu(TM)$. On the other hand, pick any
$\omega\in L^2_\mu(T^*M)$ such that $|\omega|\leq 1$ $\mu$-a.e.\ in $M$.
Proposition \ref{prop:P_quotient} provides us with some element
$\underline\omega\in\Gamma_2(T^*M;\mu)$ satisfying $\P(\underline\omega)=\omega$
and $|\underline\omega|=|\omega|$ in the $\mu$-a.e.\ sense. Therefore
\[
\omega(v)=\P(\underline\omega)(v)\leq
\underset{|\underline\omega'|\leq 1}{\rm ess\,sup\;}\P(\underline\omega')(v)
\overset{\eqref{eq:def_iota}}=
\underset{|\underline\omega'|\leq 1}{\rm ess\,sup\;}\underline\omega'\big(\iota(v)\big)=
\big|\iota(v)\big|\quad\mu\text{-a.e.\ in }M,
\]
whence accordingly we conclude that
\[
|v|=\underset{|\omega|\leq 1}{\rm ess\,\sup\;}\omega(v)\leq\big|\iota(v)\big|
\quad\mu\text{-a.e.\ in }M.
\]
This proves that the identity \eqref{eq:iota_isometric} is satisfied.
The last statement now directly follows from the fact that the module
$\Gamma_2(TM;\mu)$ has local dimension equal to $n$.
\end{proof}
\begin{remark}{\rm
In general, the isometric embedding $\iota:\,L^2_\mu(TM)\to\Gamma_2(TM;\mu)$
provided by Theorem \ref{thm:iota_isometric} might not be an isomorphism
(even if we assume that the measure $\mu$ has full support).

For instance, choose any sequence $(a_k)_{k\in\N}$ of positive real numbers
such that $\sum_{k=0}^\infty a_k<+\infty$, enumerate the rational numbers as
$(q_k)_{k\in\N}$ and define the finite Borel measure $\mu$ on $\R$ as
\[\mu:=\sum_{k=0}^\infty a_k\,\delta_{q_k},\quad\text{ where }\delta_{q_k}
\text{ is the Dirac delta at }q_k.\]
Therefore $W^{1,2}(\R,\sfd_{\rm Eucl},\mu)=L^2(\mu)$ and all its elements have
null minimal weak upper gradient (cf.\ \cite[Remark 4.12]{AGS11_HeatFlow}),
thus accordingly $L^2_\mu(T\R)=\{0\}$. On the other hand, it is immediate
to verify that the space $\Gamma_2(T\R;\mu)$ is non-trivial.
\fr}\end{remark}
\begin{corollary}\label{cor:Sobolev_reflexive}
Let $(M,F)$ be a geodesically complete, reversible Finsler manifold.
Let $\mu$ be a non-negative Radon measure on $(M,\sfd_F)$. Then the
Sobolev space $W^{1,2}(M,\sfd_F,\mu)$ is reflexive.
\end{corollary}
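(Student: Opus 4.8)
The plan is to deduce the reflexivity of $W^{1,2}(M,\sfd_F,\mu)$ from that of the abstract cotangent module $L^2_\mu(T^*M)$, by invoking the last implication in Proposition \ref{prop:weak_cptness_d}. Hence the whole task reduces to showing that $L^2_\mu(T^*M)$ is a reflexive Banach space.

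To this end I would exploit the projection operator $\P:\,\Gamma_2(T^*M;\mu)\to L^2_\mu(T^*M)$ built in Lemma \ref{lem:def_P}: it is linear, continuous, and --- by Proposition \ref{prop:P_quotient} --- surjective. The open mapping theorem then yields a Banach-space isomorphism between $L^2_\mu(T^*M)$ and the quotient $\Gamma_2(T^*M;\mu)/\ker\P$. (The sharper statement in Proposition \ref{prop:P_quotient}, namely that $\P$ is even a metric quotient map, makes this isomorphism isometric, but this refinement is not needed here.) Since $\Gamma_2(T^*M;\mu)$ is reflexive by \eqref{eq:concrete_modules_dual}, and since the quotient of a reflexive Banach space by a closed subspace is again reflexive --- because $\big(\Gamma_2(T^*M;\mu)/\ker\P\big)^*$ is isometric to the annihilator $(\ker\P)^\perp$, a closed subspace of the reflexive space $\Gamma_2(T^*M;\mu)^*$, hence itself reflexive --- we conclude that $L^2_\mu(T^*M)$ is reflexive, and therefore so is $W^{1,2}(M,\sfd_F,\mu)$.

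I expect essentially no obstacle at this point: all the substantial work is already contained in Proposition \ref{prop:P_quotient} (which in turn rests on the approximation Theorem \ref{thm:C1_approx_Lip} and the density Proposition \ref{prop:density_energy_C1}), and the remaining ingredients are the elementary functional-analytic facts that closed subspaces and quotients of reflexive Banach spaces are reflexive. One could alternatively argue through the tangent side: by Theorem \ref{thm:iota_isometric} the module $L^2_\mu(TM)$ embeds isometrically --- hence as a closed subspace --- into the reflexive space $\Gamma_2(TM;\mu)$, so it is reflexive; but turning this into reflexivity of $W^{1,2}(M,\sfd_F,\mu)$ would require identifying $L^2_\mu(T^*M)$ with a Banach predual of $L^2_\mu(TM)$, which is less immediate than the quotient argument. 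For this reason I would present the quotient argument above.
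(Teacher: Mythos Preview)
Your argument is correct. The paper takes the tangent-side route you mention at the end, but via a different mechanism than the one you dismiss: rather than arguing that $L^2_\mu(TM)$ is a closed subspace of a reflexive space, it invokes the last clause of Theorem~\ref{thm:iota_isometric} --- that $L^2_\mu(TM)$ is \emph{finitely generated} --- and then cites \cite[Theorem~1.4.7]{G18_NonSmooth}, which states that finitely-generated $L^2$-normed $L^\infty$-modules are reflexive; Proposition~\ref{prop:weak_cptness_d} then finishes. This sidesteps the predual-identification concern you raise. Both proofs rest on the same substantial input (Proposition~\ref{prop:P_quotient} and its adjoint Theorem~\ref{thm:iota_isometric}). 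Your quotient argument has the merit of being self-contained within classical Banach-space theory (open mapping theorem plus the fact that quotients of reflexive spaces are reflexive), whereas the paper's version is a line shorter but leans on the external module-theoretic reflexivity result.
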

\begin{proof}
Theorem \ref{thm:iota_isometric} says that $L^2_\mu(TM)$ is
finitely-generated, whence it is reflexive
(cf.\ for instance \cite[Theorem 1.4.7]{G18_NonSmooth}).
This implies that $W^{1,2}(M,\sfd_F,\mu)$ is reflexive by
Proposition \ref{prop:weak_cptness_d}.
\end{proof}
\begin{remark}{\rm
We point out that Corollary \ref{cor:Sobolev_reflexive} can be alternatively
deduced as a consequence of a result by Ambrosio-Colombo-Di Marino,
namely \cite[Corollary 7.5]{ACDM}, as we are going to sketch.

Fix $\bar x\in M$. We call $M_k$ the closed ball of radius $k\in\N$ centered
at $\bar x$ and we set $\mu_k:=\mu\restr{M_k}$. By properness of $(M,\sfd_F)$
and Bishop-Gromov inequality we know that each metric space $(M_k,\sfd_F)$
is doubling, thus accordingly $W^{1,2}(M_k,\sfd_F,\mu_k)$ is reflexive for
every $k\in\N$ by \cite[Corollary 7.5]{ACDM}.
Now pick a bounded sequence $(f_i)_{i\in\N}$ in $W^{1,2}(M,\sfd_F,\mu)$.
A diagonalisation argument, together with Proposition \ref{prop:weak_cptness_d}
and  \cite[Proposition 2.6]{Gigli12}, grant the existence of a function
$f\in W^{1,2}(M,\sfd_F,\mu)$ and of a (not relabeled) subsequence of $(f_i)_{i\in\N}$
such that $(f_i,\d f_i)\weakto(f,\d f)$ in the weak topology of
$L^2(\mu)\times L^2_\mu(T^*M)$. This yields the reflexivity of $W^{1,2}(M,\sfd_F,\mu)$
by Proposition \ref{prop:weak_cptness_d}.
\fr}\end{remark}

We conclude by focusing on the special case of Riemannian manifolds.
By combining Theorem \ref{thm:iota_isometric} with Lemma
\ref{lem:Gamma_for_Riemannian}, we can immediately obtain the following result.
(A word on notation: given a Riemannian manifold $(M,g)$, we
denote by $\sfd_g$ the distance induced by the metric $g$.)
\begin{theorem}[Weighted Riemannian manifolds are infinitesimally Hilbertian]
\label{thm:UiH_Riemannian}
Let $(M,g)$ be a geodesically complete Riemannian manifold.
Fix any non-negative Radon measure $\mu$ on $(M,\sfd_g)$.
Then the metric measure space $(M,\sfd_g,\mu)$ is infinitesimally Hilbertian.
\end{theorem}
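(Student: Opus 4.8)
The plan is to deduce Theorem~\ref{thm:UiH_Riemannian} directly from Theorem~\ref{thm:iota_isometric}, Lemma~\ref{lem:Gamma_for_Riemannian} and the characterisation \eqref{eq:characterisation_W12_Sobolev}. Since $(M,g)$ is a geodesically complete Riemannian manifold, it is in particular a geodesically complete, reversible Finsler manifold (with $F(x,v):=g_x(v,v)^{1/2}$), so all the machinery of Section~\ref{s:main} applies to $(M,\sfd_g,\mu)=(M,\sfd_F,\mu)$.

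First I would invoke Lemma~\ref{lem:Gamma_for_Riemannian} to conclude that the concrete tangent module $\Gamma_2(TM;\mu)$ is a Hilbert module: indeed each norm $F(x,\cdot)$ comes from an inner product, so it satisfies the parallelogram identity, and the lemma gives exactly this. Next, Theorem~\ref{thm:iota_isometric} provides the isometric embedding $\iota:\,L^2_\mu(TM)\hookrightarrow\Gamma_2(TM;\mu)$, meaning $|\iota(v)|=|v|$ $\mu$-a.e.\ for every $v$. Since a submodule of a Hilbert module, with the induced pointwise norm, again satisfies the parallelogram identity pointwise $\mu$-a.e., the isometric copy $\iota\big(L^2_\mu(TM)\big)$ inside $\Gamma_2(TM;\mu)$ is a Hilbert module; transporting this back through $\iota$ shows that $L^2_\mu(TM)$ itself is a Hilbert module.

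Finally I would apply \eqref{eq:characterisation_W12_Sobolev}, which states that $L^2_\mu(TM)$ is a Hilbert module if and only if $W^{1,2}(M,\sfd_F,\mu)$ is a Hilbert space. Combined with the identification $\sfd_g=\sfd_F$, this yields that $W^{1,2}(M,\sfd_g,\mu)$ is a Hilbert space, i.e.\ $(M,\sfd_g,\mu)$ is infinitesimally Hilbertian in the sense of Definition~\ref{def:iH}, as desired.

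There is essentially no obstacle here: all the substantial work has been done in Theorem~\ref{thm:iota_isometric} and Proposition~\ref{prop:P_quotient}. The only point requiring a line of care is the elementary observation that the pointwise parallelogram identity passes to a submodule under an isometry of pointwise norms — but since $\iota$ preserves $|\cdot|$ pointwise and the module operations are pointwise, $|\iota(v)+\iota(w)|^2+|\iota(v)-\iota(w)|^2 = 2|\iota(v)|^2+2|\iota(w)|^2$ follows immediately from the same identity in $\Gamma_2(TM;\mu)$, and rewriting via $|\iota(\cdot)|=|\cdot|$ gives the parallelogram identity in $L^2_\mu(TM)$.
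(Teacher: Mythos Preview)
Your proposal is correct and follows essentially the same approach as the paper's own proof: define $F(x,v)=g_x(v,v)^{1/2}$, use Lemma~\ref{lem:Gamma_for_Riemannian} to see that $\Gamma_2(TM;\mu)$ is Hilbert, invoke the pointwise-norm-preserving embedding $\iota$ of Theorem~\ref{thm:iota_isometric} to deduce that $L^2_\mu(TM)$ is Hilbert, and conclude via \eqref{eq:characterisation_W12_Sobolev}. Your extra line spelling out why the parallelogram identity transfers through $\iota$ is a welcome clarification of what the paper leaves implicit.
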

\begin{proof}
Let us define $F(x,v):=g_x(v,v)^{1/2}$ for every $x\in M$ and $v\in T_x M$,
so that $(M,F)$ is a reversible Finsler manifold (and $\sfd_F=\sfd_g$).
Consider the embedding $\iota:\,L^2_\mu(TM)\to\Gamma_2(TM;\mu)$
provided by Theorem \ref{thm:iota_isometric}. Since $\iota$ preserves the
pointwise norm and $\Gamma_2(TM;\mu)$ is a Hilbert module (by Lemma
\ref{lem:Gamma_for_Riemannian}), we deduce that $L^2_\mu(TM)$ is a Hilbert
module as well. This grants that the Sobolev space $W^{1,2}(M,\sfd_F,\mu)$
is a Hilbert space by \eqref{eq:characterisation_W12_Sobolev}, thus proving
the statement.
\end{proof}
\section{Alternative proof of Theorem \ref{thm:UiH_Riemannian}}\label{s:alternative_proof}
Here we provide an alternative proof of Theorem \ref{thm:UiH_Riemannian}.
Instead of deducing it as a corollary of Theorem \ref{thm:iota_isometric},
we rather make use of the following fact that has been achieved in \cite{GP16_Behaviour}:
\begin{equation}\label{eq:main_thm_linear_case}\begin{split}
&\text{Let }\big(V,\langle\cdot,\cdot\rangle\big)
\text{ be a finite-dimensional scalar product space. Let }\nu\geq 0\text{ be any}\\
&\text{Radon measure on }\big(V,\langle\cdot,\cdot\rangle\big).\text{ Then }
\big(V,\langle\cdot,\cdot\rangle,\nu\big)\text{ is infinitesimally Hilbertian.}
\end{split}\end{equation}
(Actually, the result is proven for $V=\R^d$ equipped with the Euclidean distance,
but -- as observed in \cite[Remark 2.11]{GP16_Behaviour} -- the very same proof
works for any finite-dimensional scalar product space.)
\medskip

Let $f,g\in W^{1,2}(M,\sfd_g,\mu)$ be fixed. 
In order to prove the claim, it is enough to show that
\begin{equation}\label{eq:parallelogram}
\big|D(f+g)\big|^2+\big|D(f-g)\big|^2=2\,|Df|^2+2\,|Dg|^2
\quad\text{ holds }\mu\text{-a.e.\ on }M. 
\end{equation}
Fix any $\varepsilon>0$. By using Theorem \ref{thm:Deng-Hou}
and the Lindel\"of property of $(M,\sfd_g)$, we can find two sequences
$(x_i)_{i\in\N}\subseteq M$ and $(r_i)_{i\in\N}\subseteq(0,+\infty)$
satisfying the following properties:
\begin{itemize}
\item[$\rm i)$] Calling $V_i$ the closed ball in $(M,\sfd_g)$ having radius
$r_i$ and center $x_i$, it holds that $(V_i)_{i\in\N}$ is a cover of $M$.
\item[$\rm ii)$] Calling $W_i$ the closed ball in $(T_{x_i}M,g_{x_i})$
having radius $r_i$ and center $0$, it holds that each exponential map 
$\exp_{x_i}$ is $(1+\varepsilon)$-biLipschitz between $W_i$ and $V_i$.
\end{itemize}
For any $i\in \N$, let us denote by $\varphi_i:\,V_i\to W_i$ the inverse
map of $\exp_{x_i}\restr{W_i}$. Define $\mu_i:=\mu\restr{V_i}$ and
$\nu_i:=(\varphi_i)_*\mu_i$. Then $\varphi_i$ is a map of bounded deformation 
from $(V_i,\sfd_g,\mu_i)$ to $(W_i,g_{x_i},\nu_i)$, with inverse of bounded
deformation (cf.\ \cite[Definition 2.4.1]{G18_NonSmooth} for the notion of map
of bounded deformation). Therefore \cite[formula (2.4.1)]{G18_NonSmooth} ensures
that for every $h\in W^{1,2}(V_i,\sfd_g,\mu_i)$ one has that
$h\circ\varphi_i^{-1}\in W^{1,2}(W_i,g_{x_i},\nu_i)$ and that
\begin{equation}\label{eq:(1+epsilon)_bound_on_mwug}
\frac{|Dh|\circ\varphi_i^{-1}}{1+\varepsilon}
\leq\big|D(h\circ\varphi_i^{-1})\big|\leq
(1+\varepsilon)\,|Dh|\circ\varphi_i^{-1}
\quad\text{ holds }\nu_i\text{-a.e.\ on }T_{x_i}M.
\end{equation}
Furthermore, we know from \cite[Proposition 2.6]{Gigli12} that
for any $h\in W^{1,2}(M,\sfd_g,\mu)$ and $i\in\N$ one has that
$\nchi_{V_i}h\in W^{1,2}(V_i,\sfd_g,\mu_i)$ and that
\begin{equation}\label{eq:mwug_localised}
\big|D({\nchi_{V_i}h})\big|=|Dh|\quad\text{ holds }\mu_i\text{-a.e.\ on }V_i.
\end{equation}
Now let us set $f_i:=(\nchi_{V_i}f)\circ \varphi_i^{-1}$ and
$g_i:=(\nchi_{V_i}g)\circ \varphi_i^{-1}$ for every $i\in \N$. We have that
the Sobolev space $W^{1,2}(T_{x_i}M,g_{x_i},\nu_i)\simeq W^{1,2}(W_i,g_{x_i},\nu_i)$
is a Hilbert space by \eqref{eq:main_thm_linear_case}, whence accordingly
\begin{equation}\label{eq:parallelogram_i}
\big|D(f_i+g_i)\big|^2+\big|D(f_i-g_i)\big|^2=2\,|Df_i|^2+2\,|Dg_i|^2
\quad\text{ holds }\nu_i\text{-a.e.\ on }T_{x_i}M.
\end{equation}
By combining \eqref{eq:(1+epsilon)_bound_on_mwug}, \eqref{eq:mwug_localised}
and \eqref{eq:parallelogram_i} we conclude that
\begin{equation}\label{eq:(1+epsilon)_parallelogram}
\frac{2\,|Df|^2+2\,|Dg|^2}{(1+\varepsilon)^4}\leq
\big|D(f+g)\big|^2+\big|D(f-g)\big|^2\leq
(1+\varepsilon)^4\,\big(2\,|Df|^2+2\,|Dg|^2\big)
\end{equation}
holds $\mu_i$-a.e.\ for every $i\in \N$. This implies that
\eqref{eq:(1+epsilon)_parallelogram} is satisfied
$\mu$-a.e.\ on $M$, so by letting $\varepsilon\searrow 0$
we finally obtain \eqref{eq:parallelogram}, as required.
\begin{remark}{\rm
It seems to us that also Theorem \ref{thm:iota_isometric} could
be deduced from the results in \cite{GP16_Behaviour} via a suitable
localisation argument, but with a much more involved proof. For this
reason, we chose the presentation seen in Section \ref{s:main} above.
\fr}\end{remark}
\end{document}